\newtheorem{theorem}{Theorem}[section]
\newtheorem{corollary}{Corollary}
\newtheorem{lemma}[theorem]{Lemma}
\newtheorem{proposition}{Proposition}
\theoremstyle{definition}
\newtheorem{remark}{Remark}
\newcommand{\N}{\mathbb N}
\newcommand{\Z}{\mathbb Z}
\newcommand{\R}{\mathbb R}
\newcommand{\C}{\mathbb C}
\newcommand{\Sh}{\mathscr{S}}
\newcommand{\ep}{\epsilon}
\newcommand{\Gact} {\gamma_{\text{c}}}
\newcommand{\Gace} {\gamma_{\emph{c}}}
\newcommand{\scal}[1]{\left\langle #1 \right\rangle} 
\newcommand{\defendproof}{\hfill $\Box$} 
\begin{document}
\title{\sc On the Cauchy problem for the nonlinear semi-relativistic equation in Sobolev spaces}
\author{\sc{Van Duong Dinh}} 
\date{ }
\maketitle

\begin{abstract}
We proved the local well-posedness for the power-type nonlinear semi-relativistic or half-wave equation (NLHW) in Sobolev spaces. Our proofs mainly bases on the contraction mapping argument using Strichartz estimate. We also apply the technique of Christ-Colliander-Tao in \cite{ChristCollianderTao} to prove the ill-posedness for (NLHW) in some cases of the super-critical range. 
\end{abstract}


\section{Introduction}
We consider the Cauchy problem for the semi-relativistic or half-wave equation posed on $\R^d, d\geq 1$, namely
\begin{align}
\left\{
\begin{array}{rcl}
i\partial_t u(t,x) + \Lambda u(t,x)&=&-\mu |u|^{\nu-1} u(t,x), \quad (t, x) \in \R \times \R^d, \\
u(0,x) &=& u_0(x), \quad x\in \R^d.
\end{array}
\right.
\tag{NLHW}
\end{align}
where $\nu>1, \mu\in \{\pm 1\}$ and $\Lambda=\sqrt{-\Delta}$ is the Fourier multiplier of symbol $|\xi|$. The number $\mu=1$ (resp. $\mu=-1$) corresponds to the defocusing case (resp. focusing case). The Cauchy problem such as (NLHW) arises in various physical contexts, such as water waves (see e.g. \cite{IonescuPusateri}), and the gravitational collapse (see e.g. \cite{ElgartSchlein}, \cite{FrohlichLenzmann}).\newline
\indent It is worth noticing that if we set for $\lambda>0$, 
\[
u_\lambda(t,x)= \lambda^{-\frac{1}{\nu-1}} u( \lambda^{-1} t, \lambda^{-1} x),
\]
then the (NLHW) is invariant under this scaling, i.e. for $T \in (0, +\infty]$, $u$ solves the (NLHW) on $(-T, T)$ is equivalent to $u_\lambda$ solves the (NLHW) on $(-\lambda T, \lambda T)$ with initial data $u_\lambda(0)= \lambda^{-\frac{1}{\nu-1}} u_0(\lambda^{-1}x)$. We also have
\[
\|u_\lambda(0)\|_{\dot{H}^\gamma} = \lambda^{\frac{d}{2}-\frac{1}{\nu-1}- \gamma} \|u_0\|_{\dot{H}^\gamma}. \nonumber
\]
From this, we define the critical regularity exponent for the (NLHW) by
\begin{align}
\Gact =\frac{d}{2} - \frac{1}{\nu-1}. \label{critical half wave}
\end{align}
One says that $H^\gamma$ is sub-critical (critical, super-critical) if $\gamma>\Gact$ ($\gamma=\Gact$, $\gamma<\Gact$) respectively. Another important property of the (NLHW) is that the following mass and energy are formally conserved under the flow of the equation,
\[
M(u(t))= \int |u(t,x)|^2 dx, \quad E(u(t))= \int \frac{1}{2}|\Lambda^{1/2} u(t,x)|^2 + \frac{\mu}{\nu+1}|u(t,x)|^{\nu+1} dx.
\] 
\indent The nonlinear half-wave equation (NLHW) has attracted a lot of works in a past decay (see e.g. \cite{FrohlichLenzmann}, \cite{KriegerLenzmannRaphael}, \cite{FujiwaraOzawa}, \cite{ChoffrutPocovnicu}, \cite{FujiwaraGeorgievOzawa} and references therein). The main purpose of this note is to give the local well-posedness and ill-posedness results for the (NLHW) in Sobolev spaces. It can be seen as an complement to the recent results of Hong-Sire \cite{HongSire} where the authors considered the same problem for the nonlinear fractional Schr\"odinger equation on $\R^d, d\geq 1$, namely
\[
i\partial_t u + (-\Delta)^\sigma u + \mu |u|^{\nu-1}u =0, \quad \sigma \in (0,1)\backslash \{1/2\}. \tag{NLFS}
\]
The proofs of the local well-posedness results in \cite{HongSire} are based on Strichartz estimates, which are similar to those for the classical nonlinear Schr\"odinger equation, but with a loss of derivative. In the sub-critical case, the derivative loss is compensated for by using Sobolev embeddings. In the critical case, the Sobolev embedding does not help. To remove the derivative loss, the authors used Strichartz norms localized in dyadic pieces, and then summed up in a $\ell^2$-fashion. For the ill-posedness result in \cite{HongSire}, the pseudo-Galilean transformation is expoited. This transformation does not provide an invariance of the equation, but nevertheless, in a certain setting, the error of the psedo-Galilean transformation can be controlled. This allows the authors to prove the ill-posedness for the (NLFS) in $H^\gamma$ with a certain range of the negative exponent $\gamma$. \newline
\indent The proofs of the well-posedness results in this note are also based on Strichartz estimates and the standard contraction argument. We thus only focus on the case $d\geq 2$ where Strichartz estimates are available and just recall known results in the case $d=1$. More precisely, in the sub-critical case, we prove the well-posedness in $H^\gamma$ with $\gamma$ satisfying
\begin{align}
\left\{
\begin{array}{ll}
\gamma > 1-1/\max(\nu-1,4) & \text{when } d=2, \\
\gamma > d/2-1/\max(\nu-1,2) &\text{when } d\geq 3,
\end{array}
\right. \label{subcritical assuption}
\end{align}
and if $\nu$ is not an odd integer, 
\begin{align}
\lceil \gamma\rceil \leq \nu, \label{assumption smoothness subcritical}
\end{align}
where $\lceil \gamma \rceil$ is the smallest positive integer greater than or equal to $\gamma$. This remains a gap between $\Gact$ and $1-1/\max(\nu-1,4)$ when $d=2$ and $ d/2-1/\max(\nu-1,2)$ when $d\geq3$. The proof of this result also makes use of the Sobolev embedding, which is similar to that in \cite{HongSire}. The point here is that we present a simple proof and a better result comparing to those of \cite{HongSire}. In the critical case, we can successfully apply the argument of \cite{HongSire} (see also \cite{Dinh}) to prove the local well-posedness with small data scattering in $H^{\Gact}$ provided $\nu>5$ for $d=2$ and $\nu>3$ for $d\geq 3$ and if $\nu$ is not an odd integer, 
\begin{align}
\lceil \Gact\rceil \leq \nu. \label{assumption smoothness critical}
\end{align}
The cases $\nu \in (1,5]$ when $d=2$ and $\nu\in (1,3]$ when $d\geq 3$ still remain open. It requires another technique rather than just Strichartz estimates. Note that conditions $(\ref{assumption smoothness subcritical})$ and $(\ref{assumption smoothness critical})$ allow the nonlinearity to have enough of regularity to apply the fractional derivative estimates (see Subsection $\ref{subsection fractional derivative}$). Finally, using the technique of Christ-Colliander-Tao given in \cite{ChristCollianderTao}, we are able to prove the ill-posedness for the (NLHW) in a certain range of the super-critical case. More precisely, we prove the ill-posedness in $H^\gamma$ with 
\begin{align}
\left\{
\begin{array}{ll}
\gamma \in (-\infty,-d/2] \cup [0,\Gact) & \text{when } \Gact>0, \\
\gamma \in (-\infty,-d/2] \cap (-\infty, \Gact) & \text{otherwise}.
\end{array}
\right. \label{condition ill posedness}
\end{align}
We expect that the ill-posedness still holds in the range $\gamma \in (-d/2,\max\{0,\Gact\})$ as for the classical nonlinear Schr\"odinger equation (see \cite{ChristCollianderTao}). But it is not clear to us how to prove it at the moment. As mentioned above, the authors in \cite{HongSire} used the technique of \cite{ChristCollianderTao} with the pseudo-Galilean transformation to prove the ill-posedness for the (NLFS) with negative exponent. Unfortunately, it seem to be difficult to control the error of the pseudo-Galilean transformation in high Sobolev norms. As one can check from the ill-posedness result of \cite{HongSire} (Theorem 1.5 there), this result holds only in the one dimensional case and does not hold in the two and three dimensional cases as claimed there. Moreover, the dependence of $\upsilon$ in the estimate (5.7) of \cite{HongSire} seems to be eliminated which may effect the proof of the ill-posedness given in \cite{HongSire}. We thus do not persuade the technique of \cite{HongSire} to prove the ill-posedness result for the (NLHW) in the range $\gamma \in (-d/2,\max\{0,\Gact\})$. We end this paragraph by noting that the techniques used in this note can be applied without any difficulty for the (NLFS). It somehow provides better results for those of \cite{HongSire}.  \newline
\indent Let us now recall known results about the local well-posedness and the ill-posedness for the (NLHW) in 1D. It is well-known that the (NLHW) is locally well-posed in $H^\gamma(\R)$, with $\gamma>1/2$ satisfying $(\ref{assumption smoothness subcritical})$ if $\nu$ is not an odd integer, by using the energy method and the contraction mapping argument. When $\nu=3$, i.e. cubic nonlinearity, the (NLHW) is locally well-posed in $H^\gamma(\R)$ with $\gamma\geq 1/2$ (see e.g. \cite{KriegerLenzmannRaphael}, \cite{Pocovnicu}). This result is optimal in the sense that the equation is ill-posed in $H^\gamma(\R)$ provided $\gamma<1/2$ (see e.g. \cite{ChoffrutPocovnicu}). The proof of this ill-posedness result is mainly based on the relation with the cubic Szeg\"o equation, which can not be easily extended to general nonlinearity. To our knowledge, the local well-posedness for the generalized (NLHW) in $H^\gamma(\R)$ with $\gamma \leq 1/2$ seems to be an open question. \newline 
\indent Before stating our results, let us introduce some notations (see \cite[Appendix]{GinibreVelo85}, \cite{Triebel} or \cite{BerghLosfstom}). Let $\varphi_0 \in C^\infty_0(\R^d)$ be such that $\varphi_0(\xi)=1$ for $|\xi|\leq 1$ and $\text{supp}(\varphi_0) \subset \{\xi \in \R^d, |\xi|\leq 2\}$. Set $\varphi(\xi):= \varphi_0(\xi)-\varphi_0(2\xi)$. We see that $\varphi\in C^\infty_0(\R^d)$ and $\text{supp}(\varphi)\subset \{\xi \in \R^d, 1/2 \leq |\xi|\leq 2 \}$. We denote the Littlewood-Paley projections by $P_0:=\chi_0(D), P_N:=\chi(N^{-1}D)$ with $N=2^k, k\in \Z$ where $\chi_0(D), \chi(N^{-1}D)$ are the Fourier multipliers by $\chi_0(\xi)$ and $\chi(N^{-1}\xi)$ respectively. Given $\gamma \in \R$ and $1 \leq q \leq \infty$, the Sobolev and Besov spaces are defined by
\begin{align*}
H^\gamma_q &:= \Big\{ u \in \Sh' \ | \  \|u\|_{H^\gamma_q}:=\|\scal{\Lambda}^\gamma u\|_{L^q} <\infty \Big\}, \quad \scal{\Lambda}:=\sqrt{1+\Lambda^2}, \\
B^\gamma_q&:= \Big\{ u \in \Sh' \ | \ \|u\|_{B^\gamma_q}:= \|P_0 u\|_{L^q} + \Big( \sum_{N\in 2^{\Z}} N^{2\gamma} \|P_N u\|^2_{L^q} \Big)^{1/2} <\infty \Big\},
\end{align*}
where $\Sh'$ is the space of tempered distributions. Now let $\Sh_0$ be a subspace of the Schwartz space $\Sh$ consisting of functions $\phi$ satisfying $D^\alpha \hat{\phi}(0)=0$ for all $\alpha \in \N^d$ where $\hat{\cdot}$ is the Fourier transform on $\Sh$ and $\Sh'_0$ is its topology dual space. One can see $\Sh'_0$ as $\Sh'/\mathscr{P}$ where $\mathscr{P}$ is the set of all polynomials on $\R^d$. The homogeneous Sobolev and Besov spaces are defined by
\begin{align*}
\dot{H}^\gamma_q &:= \Big\{ u \in \Sh'_0 \ | \  \|u\|_{\dot{H}^\gamma_q}:=\|\Lambda^\gamma u\|_{L^q} <\infty \Big\}, \\
\dot{B}^\gamma_q &:= \Big\{ u \in \Sh'_0 \ | \  \|u\|_{\dot{B}^\gamma_q}:=\Big( \sum_{N\in 2^{\Z}} N^{2\gamma} \|P_N u\|^2_{L^q} \Big)^{1/2} <\infty \Big\}.
\end{align*}
It is easy to see that the norms $\|u\|_{B^\gamma_q}$ and $\|u\|_{\dot{B}^\gamma_q}$ do not depend on the choice of $\varphi_0$, and $\Sh_0$ is dense in $\dot{H}^\gamma_q, \dot{B}^\gamma_q$. Under these settings, $H^\gamma_q, B^\gamma_q, \dot{H}^\gamma_q$ and $\dot{B}^\gamma_q$ are Banach spaces with the norms $\|u\|_{H^\gamma_q}, \|u\|_{B^\gamma_q}, \|u\|_{\dot{H}^\gamma_q}$ and $\|u\|_{\dot{B}^\gamma_q}$ respectively (see e.g. \cite{Triebel}). In this note, we shall use $H^\gamma:= H^\gamma_2$, $\dot{H}^\gamma:= \dot{H}^\gamma_2$. We note (see \cite{BerghLosfstom}, \cite{GinibreVelo85}) that if $2\leq q <\infty$, then $\dot{B}^\gamma_q \subset \dot{H}^\gamma_q$. The reverse inclusion holds for $1<r\leq 2$. In particular, $\dot{B}^\gamma_2= \dot{H}^\gamma$ and $\dot{B}^0_2=\dot{H}^0_2=L^2$. Moreover, if $\gamma>0$, then $H^\gamma_q = L^q \cap \dot{H}^\gamma_q$ and $B^\gamma_q = L^q \cap \dot{B}^\gamma_q$. \newline
\indent In the sequel, a pair $(p,q)$ is said to be admissible if
\[
(p,q)\in [2,\infty]^2, \quad (p,q,d)\ne (2,\infty,3), \quad \frac{2}{p}+\frac{d-1}{q}\leq \frac{d-1}{2}.
\]
We also denote for $(p,q) \in [1,\infty]^2$,
\begin{align}
\gamma_{p,q}=\frac{d}{2}-\frac{d}{q}-\frac{1}{p}. \label{define gamma pq}
\end{align}
Our first result concerns with the local well-posedness for the (NLHW) in the sub-critical case.
\begin{theorem} \label{theorem subcritical}
Let $\gamma \geq 0$ and $\nu>1$ be such that $(\ref{subcritical assuption})$ holds, and also, if $\nu$ is not an odd integer, $(\ref{assumption smoothness subcritical})$. Then for all $u_0 \in H^\gamma$, there exist $T^* \in (0,\infty]$ and a unique solution to the \emph{(NLHW)} satisfying
\[
u \in C([0,T^*), H^\gamma) \cap L^p_{\emph{loc}}([0,T^*), L^\infty),
\]
for some $p> \max(\nu-1,4)$ when $d=2$ and some $p>\max(\nu-1,2)$ when $d\geq 3$. Moreover, the following properties hold:
\begin{itemize}
\item[i.] If $T^*<\infty$, then $\|u(t)\|_{H^\gamma} \rightarrow \infty$ as $t\rightarrow T^*$.
\item[ii.] $u$ depends continuously on $u_0$ in the following sense. There exists $0< T< T^*$ such that if $u_{0,n} \rightarrow u_0$ in $H^\gamma$ and if $u_n$ denotes the solution of the \emph{(NLHW)} with initial data $u_{0,n}$, then $0<T< T^*(u_{0,n})$ for all $n$ sufficiently large and $u_n$ is bounded in $L^a([0,T],H^{\gamma-\gamma_{a,b}}_b)$ for any admissible pair $(a,b)$ with $b<\infty$. Moreover, $u_n \rightarrow u$ in $L^a([0,T],H^{-\gamma_{a,b}}_b)$ as $n \rightarrow \infty$. In particular, $u_n \rightarrow u$ in $C([0,T],H^{\gamma-\ep})$ for all $\ep>0$.
\item[iii.] Let $\beta>\gamma$ be such that if $\nu$ is not an odd integer, $\lceil\beta\rceil \leq \nu$. If $u_0 \in H^\beta$, then $u \in C([0,T^*),H^\beta)$.
\end{itemize}
\end{theorem}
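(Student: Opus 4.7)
The plan is to apply Banach's fixed-point theorem to the Duhamel operator
\[
\Phi(u)(t) := e^{it\Lambda} u_0 + i\mu \int_0^t e^{i(t-s)\Lambda}\bigl(|u|^{\nu-1}u\bigr)(s)\, ds
\]
on a closed ball in the space $X_T := L^\infty([0,T],H^\gamma) \cap L^p([0,T],L^\infty)$, where $p$ is chosen strictly larger than $\max(\nu-1,4)$ for $d=2$ and strictly larger than $\max(\nu-1,2)$ for $d\geq 3$. The admissibility condition $\frac{2}{p}+\frac{d-1}{q}\leq \frac{d-1}{2}$ is then met by $(p,\infty)$ (and by the excluded endpoint $(2,\infty,3)$ we would move $q$ slightly away from $\infty$ and combine with Sobolev embedding). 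Strichartz for the half-wave then gives $\|e^{it\Lambda}\phi\|_{L^p_tL^\infty_x} \lesssim \|\phi\|_{\dot H^{\gamma_{p,\infty}}}$ with loss $\gamma_{p,\infty}=d/2-1/p$, and the subcritical hypothesis $(\ref{subcritical assuption})$ is exactly what is needed to choose such a $p$ with $\gamma \geq \gamma_{p,\infty}$, so that the linear evolution lies in $X_T$.

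For the nonlinear piece I would invoke the fractional chain rule (the paper's fractional-derivative subsection), whose validity when $\nu$ is not an odd integer is ensured by $(\ref{assumption smoothness subcritical})$, to obtain the pointwise-in-time bound
\[
\bigl\| |u|^{\nu-1} u \bigr\|_{H^\gamma} \lesssim \|u\|_{L^\infty}^{\nu-1} \|u\|_{H^\gamma}.
\]
Combining this with the no-loss energy estimate $\|\int_0^t e^{i(t-s)\Lambda}F\,ds\|_{L^\infty_t H^\gamma}\lesssim \|F\|_{L^1_t H^\gamma}$ and Hölder in time with exponents $\left(\tfrac{p}{\nu-1},\infty\right)$, which is admissible thanks to $p>\nu-1$, produces a gain $T^{\theta}$ with $\theta = 1-\tfrac{\nu-1}{p} > 0$, and an analogous inhomogeneous Strichartz step controls the $L^p L^\infty$ norm of $\Phi(u)$. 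A parallel computation, with $H^\gamma$ replaced by $L^2$ and the Lipschitz bound $\bigl||u|^{\nu-1}u - |v|^{\nu-1}v\bigr|\lesssim (|u|^{\nu-1}+|v|^{\nu-1})|u-v|$ in place of the chain rule, controls $\Phi(u)-\Phi(v)$ in the weaker metric $L^\infty L^2 \cap L^p L^\infty$. Choosing $T$ small enough depending only on $\|u_0\|_{H^\gamma}$ closes the contraction in the closed ball $\{\|u\|_{L^\infty H^\gamma}+\|u\|_{L^p L^\infty}\leq 2C\|u_0\|_{H^\gamma}\}$, which is complete in the chosen metric.

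The three additional conclusions follow by standard, though careful, arguments. The blow-up alternative (i) follows because the contraction time depends only on $\|u_0\|_{H^\gamma}$, so a finite $\limsup_{t\to T^*}\|u(t)\|_{H^\gamma}$ would allow extension past $T^*$. For continuous dependence (ii), I would first show that solutions $u_n$ associated with $u_{0,n}\to u_0$ exist on a common interval $[0,T]$ and are uniformly bounded in $L^a([0,T],H^{\gamma-\gamma_{a,b}}_b)$ for every admissible pair $(a,b)$ with $b<\infty$, by running the contraction argument for each $u_n$ and invoking Strichartz in that norm; convergence of $u_n - u$ in $L^a([0,T],H^{-\gamma_{a,b}}_b)$ is then obtained by repeating the difference estimate with this norm on the left, where Strichartz carries no loss; finally, interpolating between the uniform bound and the weak convergence recovers $u_n\to u$ in $C([0,T],H^{\gamma-\epsilon})$ for any $\epsilon>0$. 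The persistence of regularity (iii) is a bootstrap: rerunning the fixed-point inequality in the $H^\beta$ scale and using the already-established finite $L^p([0,T'],L^\infty)$ norm, $T'<T^*$, to absorb the nonlinearity via Gronwall delivers an $H^\beta$ bound on every compact sub-interval.

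The principal obstacle is the derivative loss in Strichartz: it forces the subcritical gap in $(\ref{subcritical assuption})$ (since we must have $\gamma\geq d/2-1/p$ while simultaneously $p>\nu-1$) and it is what makes part (ii) subtle, because direct estimates on $u_n - u$ in $H^\gamma$ would exceed the available regularity; one must therefore pair uniform bounds at level $\gamma-\gamma_{a,b}$ with convergence at level $-\gamma_{a,b}$ and rely on interpolation to land back in a high Sobolev space. The condition $(\ref{assumption smoothness subcritical})$ on $\lceil\gamma\rceil$, needed only to legitimize the fractional chain rule for non-odd-integer $\nu$, is otherwise invisible in the argument.
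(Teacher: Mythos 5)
Your proposal is correct and takes essentially the same approach as the paper: contraction mapping via Strichartz estimates with derivative loss, the fractional chain rule from Corollary $\ref{coro fractional derivatives}$, H\"older in time to extract the factor $T^{1-(\nu-1)/p}$, a weaker $L^2$-based metric for the contraction estimate, and the standard blowup/continuous-dependence/persistence arguments. The one cosmetic difference is that the paper runs the contraction in $L^p([0,T],H^{\gamma-\gamma_{p,q}}_q)$ with a finite $q$ (chosen admissible), rather than $L^p L^\infty$ directly, since the Sobolev-form Strichartz estimate of Corollary $\ref{coro strichartz}$ requires $q<\infty$; the $L^p L^\infty$ control then drops out via the Sobolev embedding $H^{\gamma-\gamma_{p,q}}_q\hookrightarrow L^\infty$, which you invoke, so the two formulations coincide in substance.
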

The continuous dependence can be improved to hold in $C([0,T],H^\gamma)$ if we assume that $\nu>1$ is an odd integer or $\lceil\gamma\rceil \leq \nu-1$ otherwise (see Remark $\ref{rem continuous dependence}$).  We also have the following local well-posedness with small data scattering in the critical case.
\begin{theorem} \label{theorem critical}
Let 
\begin{align}
\left\{
\begin{array}{ll}
\nu>5 & \text{when } d=2, \\
\nu > 3 &\text{when } d\geq 3,
\end{array}
\right. \label{critical assuption}
\end{align}
and also, if $\nu$ is not an odd integer, $(\ref{assumption smoothness critical})$. Then for all $u_0 \in H^{\Gace}$, there exist $T^* \in (0,\infty]$ and a unique solution to the \emph{(NLHW)} satisfying
\[
u \in C([0,T^*),H^{\Gace}) \cap L^p_{\emph{loc}} ([0,T^*),B^{\Gace-\gamma_{p,q}}_q),
\]
where $p=4, q= \infty$ when $d=2$; $2<p<\nu-1, q=p^\star=2p/(p-2)$ when $d=3$; $p=2, q=2^\star=2(d-1)/(d-3)$ when $d\geq 4$.  
Moreover, if $\|u_0\|_{\dot{H}^{\Gace}}<\varepsilon$ for some $\varepsilon>0$ small enough, then $T^*=\infty$ and the solution is scattering in $H^{\Gace}$, i.e. there exists $u^+_0 \in H^{\Gace}$ such that 
\[
\lim_{t\rightarrow +\infty} \|u(t)-e^{it\Lambda} u^+_0 \|_{H^{\Gace}}=0.
\]
\end{theorem}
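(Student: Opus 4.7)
The plan is to run Banach's fixed point theorem on the Duhamel map
\[
\Phi(u)(t) = e^{it\Lambda} u_0 - i\mu \int_0^t e^{i(t-s)\Lambda} |u|^{\nu-1} u(s)\, ds
\]
in the resolution space $X(T) = C([0,T],H^{\Gact}) \cap L^p([0,T], B^{\Gact-\gamma_{p,q}}_q)$, with $(p,q)$ chosen as in the statement so that $(p,q)$ is a sharp admissible pair for the half-wave propagator. The first step is to record the half-wave Strichartz estimate with the standard loss of derivative, namely
\[
\|e^{it\Lambda} \phi\|_{L^p_t B^{-\gamma_{p,q}}_q} \lesssim \|\phi\|_{L^2}
\]
for any admissible $(p,q)$, together with its inhomogeneous counterpart. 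Applying $\scal{\Lambda}^{\Gact}$ lifts these to bounds on the free evolution and the Duhamel term in the $L^p_t B^{\Gact-\gamma_{p,q}}_q$ norm. For the global/scattering statement the smallness comes from $\|u_0\|_{\dot{H}^{\Gact}} < \varepsilon$; for the purely local statement it comes from the fact that $\|e^{it\Lambda} u_0\|_{L^p([0,T], B^{\Gact-\gamma_{p,q}}_q)} \to 0$ as $T \to 0$ by dominated convergence on the dyadic Littlewood--Paley decomposition.

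The core analytic ingredient is the nonlinear estimate. Writing $F(u) = |u|^{\nu-1}u$, dualising Strichartz and invoking the fractional Leibniz and chain rules at order $\Gact$, I would bound
\[
\|F(u) - F(v)\|_{L^{\tilde p'}_t B^{\Gact-\gamma_{\tilde p',\tilde q'}}_{\tilde q'}} \lesssim \bigl(\|u\|_{X(T)}^{\nu-1} + \|v\|_{X(T)}^{\nu-1}\bigr) \|u-v\|_{X(T)}
\]
for a suitable admissible pair $(\tilde p, \tilde q)$. The lower thresholds $\nu > 5$ when $d=2$ and $\nu > 3$ when $d \geq 3$ are precisely what is needed to make the H\"older exponents in time and space balance at the critical scaling, after using the Sobolev embedding $H^{\Gact} \hookrightarrow L^r$ on the factors that do not carry the derivatives. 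The assumption $\lceil \Gact \rceil \leq \nu$ when $\nu$ is not an odd integer guarantees that $F$ has enough H\"older regularity for the fractional chain rule to be applicable at order $\Gact$.

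Combining the Strichartz bound on the linear evolution with the nonlinear estimate closes the contraction on a small ball of $X(T)$, yielding a unique fixed point. For $\|u_0\|_{\dot{H}^{\Gact}} < \varepsilon$ this works with $T = \infty$ and delivers a finite global $L^p_t B^{\Gact-\gamma_{p,q}}_q$ norm; scattering then follows in the standard way by checking that $e^{-it\Lambda} u(t)$ is Cauchy in $H^{\Gact}$ as $t \to \infty$, using once more the inhomogeneous Strichartz inequality together with the nonlinear estimate on $[t,\infty)$. The hardest step I anticipate is the critical nonlinear estimate in the Besov framework: verifying that the fractional Leibniz rule interacts correctly with the $\ell^2$ square-function structure of $B^{\Gact-\gamma_{p,q}}_q$ while the H\"older exponents sit exactly at the critical line. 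This is precisely where Besov rather than Sobolev spaces on the Strichartz side are essential, since summing dyadic Strichartz bounds on each $P_N F(u)$ in $\ell^2$ is what converts the derivative loss into a gain that closes at the scaling-critical regularity $\gamma = \Gact$.
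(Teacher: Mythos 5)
Your overall architecture matches the paper's: a contraction in $C_t H^{\Gact} \cap L^p_t B^{\Gact-\gamma_{p,q}}_q$ with a weaker distance involving $L^\infty_t L^2$ and $L^p_t \dot B^{-\gamma_{p,q}}_q$, local-in-time smallness of the free evolution via dominated convergence on the dyadic pieces, and scattering by showing $e^{-it\Lambda}u(t)$ is Cauchy in $H^{\Gact}$ using the inhomogeneous estimate on $[t_1,t_2]$. All of that is right and is what the paper does. What is missing is the single step the paper actually has to work for: the nonlinear estimate at critical regularity.

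You propose to close the contraction by ``fractional Leibniz and chain rules at order $\Gact$'' followed by ``Sobolev embedding $H^{\Gact}\hookrightarrow L^r$ on the factors that do not carry the derivatives,'' balancing H\"older exponents in time and space. That is exactly the mechanism of the subcritical Theorem \ref{theorem subcritical}, and it is exactly what fails at $\gamma=\Gact$: the half-wave Strichartz estimates lose $\gamma_{p,q}>0$ derivatives, so the Besov--Strichartz norm $\|u\|_{L^p_t \dot B^{\Gact-\gamma_{p,q}}_q}$ does not embed into $L^p_t L^\infty_x$, and $\dot H^{\Gact}\not\hookrightarrow L^\infty$ since $\Gact<d/2$. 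Consequently the $L^\infty$ factor needed in the chain-rule bound $\|F(u)\|_{\dot H^{\Gact}}\lesssim \|u\|^{\nu-1}_{L^\infty}\|u\|_{\dot H^{\Gact}}$ is not available by embedding, and the H\"older exponents do not balance against any admissible dual pair $(\tilde p',\tilde q')$; the introduction of the paper states explicitly that ``in the critical case, the Sobolev embedding does not help.'' The paper isolates the missing ingredient as Lemma \ref{lem nonlinear control}, a genuinely multilinear estimate bounding $\|u\|^{\nu-1}_{L^{\nu-1}_t L^\infty_x}$ by $\|u\|^2_{L^2_t\dot B^{\Gact-\gamma_{2,2^\star}}_{2^\star}}\|u\|^{\nu-3}_{L^\infty_t\dot H^{\Gact}}$ (for $d\geq4$), proved by a CKSTT-style frequency-envelope argument: expand $\|u\|_{L^\infty}\leq\sum_N\|P_N u\|_{L^\infty}$, raise to the power $\nu-1=m/n$, order the $m$ frequencies, estimate the top $n$ by Bernstein against the Strichartz side and the rest by an interpolation with exponent $\theta=1/(\nu-2)$, and sum the resulting geometric series via envelopes $\tilde c_N$. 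The thresholds $\nu>3$ (and $\nu>5$ for $d=2$) enter precisely there, through $m-2n>0$ and $\theta\in(0,1)$, not through a H\"older balance with Sobolev embedding. Your closing remark about ``summing dyadic Strichartz bounds on $P_N F(u)$ in $\ell^2$'' also mischaracterizes the mechanism: the $\ell^2$ structure enters through the Besov norms of $u$ inside that lemma, not through a paraproduct decomposition of $F(u)$. Without Lemma \ref{lem nonlinear control} or an equivalent $L^{\nu-1}_t L^\infty_x$ bound, the fixed-point argument as you have sketched it does not close at $\gamma=\Gact$.
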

Our final result is the following ill-posedness for the (NLHW).
\begin{theorem} \label{theorem ill-posedness}
Let $\nu>1$ be such that if $\nu$ is not an odd integer, $\nu\geq k+1$ for some integer $k>d/2$. Then the \emph{(NLHW)} is ill-posed in $H^\gamma$ with $\gamma$ satisfying $(\ref{condition ill posedness})$. More precisely, if  $\gamma \in (-\infty,-d/2] \cup (0,\Gact)$ when $\Gact>0$ or $\gamma \in (-\infty,-d/2] \cap (-\infty, \Gact)$ otherwise, then for any $t>0$ the solution map $\Sh \ni u(0)\mapsto u(t)$ of the \emph{(NLHW)} fails to be continuous at 0 in the $H^\gamma$ topology. Moreover, if $\Gace>0$, the solution map fails to be uniformly continuous on $L^2$.
\end{theorem}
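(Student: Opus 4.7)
The plan is to adapt the Christ--Colliander--Tao decoherence argument to the half-wave operator $\Lambda=\sqrt{-\Delta}$, handling the two regimes $\gamma\in[0,\Gact)$ and $\gamma\le-d/2$ separately. In each case the goal is to exhibit, for every fixed $t>0$, a sequence of smooth initial data converging to $0$ in $H^\gamma$ whose \emph{(NLHW)}-solutions at time $t$ stay bounded away from $0$ in $H^\gamma$.

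The core mechanism is the toy ODE $i\partial_t v=-\mu|v|^{\nu-1}v$, which has the explicit pointwise flow $v(t,x)=e^{i\mu t|v_0(x)|^{\nu-1}}v_0(x)$. Fixing a smooth compactly supported profile $\phi$ and two close amplitudes $a,b$, a direct computation gives
\[
v^a(T,x)-v^b(T,x)=\phi(x)\bigl[a\,e^{i\mu T a^{\nu-1}|\phi(x)|^{\nu-1}}-b\,e^{i\mu T b^{\nu-1}|\phi(x)|^{\nu-1}}\bigr],
\]
so that choosing $T$ large enough that the relative phase is of order one on the support of $\phi$ forces $\|v^a(T)-v^b(T)\|_{H^\gamma}\gtrsim 1$ while $\|v^a(0)-v^b(0)\|_{H^\gamma}=|a-b|\|\phi\|_{H^\gamma}$ can be made arbitrarily small. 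The scaling $u_\lambda(t,x)=\lambda^{-1/(\nu-1)}u(\lambda^{-1}t,\lambda^{-1}x)$ multiplies $\|u_\lambda(0)\|_{\dot H^\gamma}$ by $\lambda^{\Gact-\gamma}$, so for $\gamma\in[0,\Gact)$ with $\Gact>0$ letting $\lambda\to 0^+$ drives the initial data to zero in $H^\gamma$, while the decoherence time compresses to $\lambda T$. For $\gamma\le-d/2$ the same effect is produced directly by a high-frequency construction such as $u_0=N^{d/2}\phi(N\cdot)$, whose $\dot H^\gamma$-norm is $\sim N^{\gamma}\to 0$ as $N\to\infty$ while the $L^2$-mass stays fixed.

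The central technical step is to justify the ODE approximation for the true \emph{(NLHW)} flow. Setting $w=u-v$, the remainder solves
\[
i\partial_t w+\Lambda w=-\Lambda v+\mu\bigl(|v|^{\nu-1}v-|u|^{\nu-1}u\bigr),\qquad w(0)=0,
\]
and one needs $\|w\|_{L^\infty([0,\lambda T],H^\gamma)}\ll\|v^a-v^b\|_{L^\infty([0,\lambda T],H^\gamma)}$. I would estimate the forcing $\Lambda v$ directly from the explicit form of $v$, apply the fractional chain rule to the nonlinear difference (invoking the hypothesis $\nu\ge k+1$ for some integer $k>d/2$ to guarantee that $|z|^{\nu-1}z$ is smooth enough in $H^\gamma$), and close the estimate by combining the half-wave Strichartz inequalities of this paper with a Gronwall bootstrap on the short rescaled time window. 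The uniform-continuity failure on $L^2$ when $\Gact>0$ follows by specializing the construction at $\gamma=0$, where the two sequences above are Cauchy in $L^2$ yet produce $L^2$-separated solutions at time $t$.

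The main obstacle is making the error estimate for $w$ scale-invariantly compatible with the decoherence gap across all admissible $\gamma$. Because $\Lambda$ is a first-order symbol, the Strichartz estimates of the paper carry a derivative loss, so the admissible pair must be chosen carefully for this loss to be absorbed on $[0,\lambda T]$ and for the rescaled forcing $\Lambda v$ to remain small. The range $\gamma\in[0,\Gact)$ should be accessible by a direct translation of the Christ--Colliander--Tao scheme, while the range $\gamma\le-d/2$ needs a distinct frequency-localized bookkeeping that bypasses the scaling reduction.
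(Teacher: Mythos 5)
Your outline captures the Christ--Colliander--Tao philosophy (ODE approximation, decoherence, scaling, separate treatment of $\gamma\leq -d/2$), but it is missing the essential two-parameter structure, and without it the error estimate you propose cannot close.

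The crux is that the one-parameter NLHW scaling $u_\lambda(t,x)=\lambda^{-1/(\nu-1)}u(\lambda^{-1}t,\lambda^{-1}x)$ leaves the ratio of the dispersive term to the nonlinearity \emph{invariant}: if $v$ is the scaled ODE profile with amplitude $\lambda^{-1/(\nu-1)}$ and spatial scale $\lambda^{-1}$, then $\Lambda v$ and $|v|^{\nu-1}v$ both scale like $\lambda^{-\nu/(\nu-1)}$, so $\|\Lambda v\|$ is not a small forcing compared to what drives $w=u-v$. Integrating your forcing over the compressed window $[0,\lambda T]$ gives a contribution of size comparable to (in fact, growing with) $T$, not something $o(1)$, and no choice of Strichartz pair or Gronwall bootstrap can undo this because it is a scaling-exact obstruction. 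What the paper does instead is decouple time and space rescaling: it first studies the small-dispersion equation $i\partial_t\phi+\delta\Lambda\phi=-\mu|\phi|^{\nu-1}\phi$ and proves via the \emph{energy method} (not Strichartz) that $\|\phi^{(\delta)}(t)-\phi^{(0)}(t)\|_{H^k}\leq C\delta\exp(C(1+|t|)^C)$, hence $\leq C\delta^{1/2}$ for $|t|\leq c|\log\delta|^c$. Only then does it set $u^{(\delta,\lambda)}(t,x)=\lambda^{-1/(\nu-1)}\phi^{(\delta)}(\lambda^{-1}t,\lambda^{-1}\delta x)$, which is a genuine NLHW solution precisely because the spatial dilation factor $\lambda^{-1}\delta$ differs from the temporal one $\lambda^{-1}$ by the extra factor $\delta$. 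The two free parameters $(\delta,\lambda)$, linked by $\lambda=\delta^\theta$ with $\theta=(d/2-\gamma)/(\Gact-\gamma)>1$, are then what allow both $\|u^{(\delta,\lambda)}(0)\|_{H^\gamma}\lesssim\ep$ and the required growth/decoherence at a later time. Your plan, as written, has only one knob and thus cannot make the ODE approximation valid.

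Two smaller points. First, for $\gamma\in(0,\Gact)$ the paper does not use decoherence between two amplitudes; it proves \emph{norm inflation} using $\|\phi^{(0)}(t)\|_{H^\gamma}\sim t^\gamma$ together with the small-dispersion bound, reserving the two-amplitude decoherence argument for the $L^2$ ($\gamma=0$) uniform-continuity statement. Second, for $\gamma\leq -d/2$ the high-frequency bubble $N^{d/2}\phi(N\cdot)$ is not enough by itself; the paper still routes the construction through $\phi^{(\delta)}$ but needs a vanishing condition $\hat\phi_0(\xi)=O(|\xi|^\kappa)$ with $\kappa>-\gamma-d/2$ to control $\|u^{(\delta,\lambda)}(0)\|_{H^\gamma}$, and a pointwise lower bound on $[\phi^{(\delta)}(1)]\,\hat{}\,$ near the origin (via the weighted $H^{k,k}$ version of the approximation lemma) to get the blow-up of $\|u^{(\delta,\lambda)}(\lambda)\|_{H^\gamma}$.
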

\indent This note is organized as follows. In Section 2, after recalling Strichartz estimates for the linear half-wave equation and nonlinear fractional derivative estimates, we prove the local well-posedness given in Theorem $\ref{theorem subcritical}$ and Theorem $\ref{theorem critical}$. The proof of the ill-posedness will be given in Section 3.  
\section{Local well-posedness}
In this section, we will give the proofs of Theorem $\ref{theorem subcritical}$ and Theorem $\ref{theorem critical}$. Our proofs are based on the standard contraction mapping argument using Strichartz estimates and nonlinear fractional derivatives. 
\subsection{Linear estimates}
In this subsection, we recall Strichartz estimates for the linear half-wave equation. 
\begin{theorem}[\cite{BCDfourier}, \cite{KeelTaoTTstar}] \label{theorem strichartz}
Let $d\geq 2, \gamma \in \R$ and $u$ be a (weak) solution to the linear half-wave equation, namely
\[
u(t)=e^{it\Lambda}u_0+\int_0^t e^{i(t-s)\Lambda} F(s)ds,
\]
for some data $u_0, F$. Then for all $(p,q)$ and $(a,b)$ admissible pairs,
\begin{align}
\|u\|_{L^p(\R, \dot{B}^\gamma_q)} \lesssim \|u_0\|_{\dot{H}^{\gamma+\gamma_{p,q}}} + \|F\|_{L^{a'}(\R, \dot{B}^{\gamma+\gamma_{p,q}-\gamma_{a',b'}-1}_{b'})}, \label{strichartz estimate Besov}
\end{align}
where $\gamma_{p,q}$ and $\gamma_{a',b'}$ are as in $(\ref{define gamma pq})$. In particular,
\begin{align}
\|u\|_{L^p(\R, \dot{B}^{\gamma-\gamma_{p,q}}_q)} \lesssim \|u_0\|_{\dot{H}^\gamma} + \|F\|_{L^1(\R, \dot{H}^\gamma)}. \label{homogeneous strichartz}
\end{align}
Here $(a,a')$ and $(b,b')$ are conjugate pairs.
\end{theorem}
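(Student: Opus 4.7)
The plan is to prove the Strichartz estimates via the standard two-step program: first establish frequency-localized dispersive and $L^p_tL^q_x$ bounds on individual dyadic blocks, then reassemble those pieces using the Besov-space structure. Since $e^{it\Lambda}$ is unitary on $L^2$ (by Plancherel, because $|\xi|$ is real-valued), the only nontrivial input required is a dispersive decay estimate for frequency-localized data.

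For a dyadic projection $P_N$ with $N=2^k$, the kernel of $e^{it\Lambda}P_N$ is the oscillatory integral $K_N(t,x)=\int e^{i(x\cdot\xi+t|\xi|)}\varphi(N^{-1}\xi)\,d\xi$. Rescaling $\xi=N\eta$ reduces matters to unit frequency, and a standard stationary-phase analysis (using that the Hessian of $|\xi|$ restricted to the unit sphere has $d-1$ nonzero eigenvalues, so the phase is of non-degenerate cotype $d-1$) yields the frequency-localized dispersive estimate
\begin{equation*}
\|e^{it\Lambda}P_N f\|_{L^\infty} \lesssim N^{(d+1)/2}|t|^{-(d-1)/2}\|P_N f\|_{L^1}.
\end{equation*}
Interpolating with the trivial $L^2\to L^2$ bound and feeding the result into the abstract $TT^*$ machinery of Keel--Tao produces, for every admissible pair $(p,q)$, the dyadic block estimate
\begin{equation*}
\|e^{it\Lambda}P_N f\|_{L^p_t L^q_x} \lesssim N^{\gamma_{p,q}}\|P_N f\|_{L^2},
\end{equation*}
where $\gamma_{p,q}$ is as in $(\ref{define gamma pq})$. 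This displays the characteristic half-derivative loss of wave-type equations, and the inhomogeneous analogue at the dyadic level follows by the same $TT^*$ argument combined with the Christ--Kiselev lemma in the off-double-endpoint range.

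To upgrade to the Besov-valued bound $(\ref{strichartz estimate Besov})$, I would exploit almost-orthogonality together with $p,q\geq 2$. Applying $P_N$ to the Duhamel formula, invoking the dyadic estimate above for both the linear and inhomogeneous pieces, and then squaring and summing in $N\in 2^{\Z}$ (using Minkowski's inequality to pull the $\ell^2$ sum inside the $L^p_t L^q_x$ norm, which is permitted since $p,q\geq 2$) gives
\begin{equation*}
\|u\|_{L^p(\R,\dot{B}^\gamma_q)} \lesssim \|u_0\|_{\dot{H}^{\gamma+\gamma_{p,q}}} + \|F\|_{L^{a'}(\R,\dot{B}^{\gamma+\gamma_{p,q}-\gamma_{a',b'}-1}_{b'})},
\end{equation*}
after identifying $\dot{B}^{\gamma+\gamma_{p,q}}_2=\dot{H}^{\gamma+\gamma_{p,q}}$. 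The specialization $(\ref{homogeneous strichartz})$ is recovered by choosing the dual admissible pair $(a,b)=(\infty,2)$ on the inhomogeneous side, for which $\gamma_{a',b'}=-1$, so that the derivative budget on $F$ is simply $\gamma$ in $L^1_t\dot{H}^\gamma_x$.

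The main obstacle is the endpoint admissible pair $(p,q)=(2,2(d-1)/(d-3))$ when $d\geq 4$, since a direct application of Hardy--Littlewood--Sobolev in the $TT^*$ step only delivers the non-endpoint range; the endpoint requires the bilinear real-interpolation argument of Keel--Tao. Since the statement of the theorem is quoted from \cite{BCDfourier} and \cite{KeelTaoTTstar}, this endpoint case is already fully handled there, and for the present paper the result is cited only to fix the notation and admissibility conventions used throughout Section 2.
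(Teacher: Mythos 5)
The paper does not actually prove Theorem~\ref{theorem strichartz}; it is quoted from \cite{BCDfourier} and \cite{KeelTaoTTstar}, with only the remark that the argument is based on a scaling technique (see \cite[Section 8.3]{BCDfourier}). Your sketch correctly reproduces that standard route — frequency-localized dispersive decay $|t|^{-(d-1)/2}$ via stationary phase, the Keel--Tao $TT^*$ scheme giving $\|e^{it\Lambda}P_N f\|_{L^p_tL^q_x}\lesssim N^{\gamma_{p,q}}\|P_N f\|_{L^2}$, Christ--Kiselev (and the bilinear endpoint argument) for the retarded integral, and $\ell^2$ summation into the Besov norms — and the specialization to $(\ref{homogeneous strichartz})$ via $(a,b)=(\infty,2)$, $\gamma_{a',b'}=\gamma_{1,2}=-1$ checks out; the only point worth adding is that pulling the $\ell^2$ sum out of $L^{a'}_tL^{b'}_x$ on the inhomogeneous side needs $a',b'\le 2$, which is automatic from admissibility of $(a,b)$.
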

The proof of this result is based on the scaling technique. We refer the reader to \cite[Section 8.3]{BCDfourier} for more details. 
\begin{corollary} \label{coro strichartz}
Let $d\geq 2$ and $\gamma \in \R$. If $u$ is a (weak) solution to the linear half-wave equation for some data $u_0, F$, then for all $(p,q)$ admissible satisfying $q<\infty$,
\begin{align}
\|u\|_{L^p(\R,H^{\gamma-\gamma_{p,q}}_q)} \lesssim \|u_0\|_{H^\gamma}+\|F\|_{L^1(\R,H^\gamma)}. \label{strichartz sobolev}
\end{align}
\end{corollary}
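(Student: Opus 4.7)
My plan is to upgrade the homogeneous Besov Strichartz estimate (2.2) to the inhomogeneous Sobolev setting by combining a Besov--Sobolev embedding with a separate treatment of the low-frequency part of $u$. The starting point is the embedding $B^s_q \hookrightarrow H^s_q$ valid for $2 \le q < \infty$, a standard consequence of the Littlewood--Paley / Triebel--Lizorkin characterization of $H^s_q$ together with $\ell^2 \hookrightarrow \ell^q$ in the dyadic summation. Since this embedding is pointwise in $t$, it passes through the $L^p_t$ norm and reduces the task to bounding
\[
\|u\|_{L^p(\R, B^{\gamma-\gamma_{p,q}}_q)} \lesssim \|u_0\|_{H^\gamma} + \|F\|_{L^1(\R, H^\gamma)}.
\]
In the paper's convention the inhomogeneous Besov norm splits as $\|v\|_{B^s_q} = \|P_0 v\|_{L^q} + \|v\|_{\dot B^s_q}$, so I would treat the two pieces separately.

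For the homogeneous Besov piece, (2.2) is applied directly, and since the applications of interest (Theorems 1.1 and 1.2) have $\gamma \ge 0$, one has $\|\cdot\|_{\dot H^\gamma} \le \|\cdot\|_{H^\gamma}$ and the desired right-hand side appears at once. For the low-frequency piece, the key observation is that admissibility of $(p,q)$ with $q \ge 2$ forces $\gamma_{p,q} \ge 0$---a short computation from $\frac{2}{p} + \frac{d-1}{q} \le \frac{d-1}{2}$ and $\gamma_{p,q} = \frac{d}{2} - \frac{d}{q} - \frac{1}{p}$, with equality only at the energy pair $(\infty,2)$. Since $P_0 u$ itself solves the linear half-wave equation with data $P_0 u_0$ and forcing $P_0 F$, I would apply (2.2) to $P_0 u$ at the shifted regularity $\gamma_{p,q}$ to obtain
\[
\|P_0 u\|_{L^p(\R, \dot B^0_q)} \lesssim \|P_0 u_0\|_{\dot H^{\gamma_{p,q}}} + \|P_0 F\|_{L^1(\R, \dot H^{\gamma_{p,q}})}.
\]
Because $P_0$ localizes Fourier support to $|\xi|\le 2$ and $\gamma_{p,q} \ge 0$, the multiplier $\Lambda^{\gamma_{p,q}} P_0$ has a bounded, compactly-supported symbol on the Fourier side, giving $\|P_0 v\|_{\dot H^{\gamma_{p,q}}} \lesssim \|v\|_{L^2} \lesssim \|v\|_{H^\gamma}$. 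Combined with the paper's recorded embedding $\dot B^0_q \hookrightarrow L^q$ for $2 \le q < \infty$, this yields $\|P_0 u\|_{L^p L^q} \lesssim \|u_0\|_{H^\gamma} + \|F\|_{L^1(\R, H^\gamma)}$, closing the estimate.

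The only delicate point I would anticipate is that the homogeneous estimate (2.2) by itself cannot absorb the $\|P_0 u\|_{L^q}$ contribution: when $\gamma - \gamma_{p,q} > 0$, the weight $N^{\gamma-\gamma_{p,q}}$ inside $\dot B^{\gamma-\gamma_{p,q}}_q$ actively suppresses the low-$N$ dyadic pieces, so the low-frequency piece has to be handled by running (2.2) a second time at the ``energy-compatible'' regularity $\gamma_{p,q}$, where the low-frequency data is tame by a Bernstein-type argument rather than dispersion alone.
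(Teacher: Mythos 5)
Your proof is correct (modulo the restriction you acknowledge), but it takes a different route from the paper's, which is worth comparing. The paper does not split $u$ into low- and high-frequency pieces at all. Instead, after upgrading the homogeneous Besov estimate $(\ref{homogeneous strichartz})$ to the homogeneous Sobolev estimate $(\ref{strichartz 1})$ via Littlewood--Paley (the same $\dot B^s_q \subset \dot H^s_q$ embedding for $q \geq 2$ that you use), the paper writes $\|u\|_{L^p H^{\gamma-\gamma_{p,q}}_q} = \|\scal{\Lambda}^{\gamma-\gamma_{p,q}} u\|_{L^p L^q}$ and observes that $\scal{\Lambda}^{\gamma-\gamma_{p,q}} u$ is again a solution of the linear half-wave equation with data $\scal{\Lambda}^{\gamma-\gamma_{p,q}} u_0$ and forcing $\scal{\Lambda}^{\gamma-\gamma_{p,q}} F$; applying $(\ref{strichartz 1})$ at the shifted regularity $\gamma_{p,q}$, and using $\gamma_{p,q}\ge 0$ to pass from $\dot H^{\gamma_{p,q}}$ to $H^{\gamma_{p,q}}$ on the right-hand side, the $\scal{\Lambda}^{\gamma-\gamma_{p,q}}$ factor reassembles into the inhomogeneous norms $\|u_0\|_{H^\gamma}$ and $\|F\|_{L^1 H^\gamma}$ in a single step. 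Your version reaches the same conclusion by applying the homogeneous estimate twice (once at regularity $\gamma$ for $\|u\|_{L^p\dot B^{\gamma-\gamma_{p,q}}_q}$, once at regularity $\gamma_{p,q}$ for $P_0 u$) and then invoking $B^s_q\hookrightarrow H^s_q$; this is more laborious but conceptually transparent about where the low-frequency contribution comes from. The one real gap is the restriction to $\gamma\ge 0$: in your homogeneous step you need $\|u_0\|_{\dot H^\gamma}\lesssim\|u_0\|_{H^\gamma}$, which fails for $\gamma<0$, whereas the paper's multiplier trick never exposes $\dot H^\gamma$ and so works for every $\gamma\in\R$ as the corollary actually asserts. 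Since the applications in the paper only invoke the corollary with $\gamma\ge 0$ this is harmless in practice, but it is a genuine loss of generality relative to the statement; the fix is precisely the paper's trick of commuting $\scal{\Lambda}^{\gamma-\gamma_{p,q}}$ through the flow so that the homogeneous estimate is only ever used at the nonnegative exponent $\gamma_{p,q}$.
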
 
\begin{proof}
We firstly remark that $(\ref{homogeneous strichartz})$ together with the Littlewood-Paley theorem yield for any $(p,q)$ admissible satisfying $q<\infty$,
\begin{align}
\|u\|_{L^p(\R,\dot{H}^{\gamma-\gamma_{p,q}}_q)} \lesssim \|u_0\|_{\dot{H}^\gamma}+\|F\|_{L^1(\R,\dot{H}^\gamma)}. \label{strichartz 1}
\end{align}
We next write $\|u\|_{L^p(\R, H^{\gamma-\gamma_{p,q}}_q)}= \|\scal{\Lambda}^{\gamma-\gamma_{p,q}} u\|_{L^p(\R, L^q)}$ and apply $(\ref{strichartz 1})$ with $\gamma=\gamma_{p,q}$ to get
\[
\|u\|_{L^p(\R, H^{\gamma-\gamma_{p,q}}_q)} \lesssim \|\scal{\Lambda}^{\gamma-\gamma_{p,q}} u_0 \|_{\dot{H}^{\gamma_{p,q}}} + \|\scal{\Lambda}^{\gamma-\gamma_{p,q}}F\|_{L^1(\R, \dot{H}^{\gamma_{p,q}})}. 
\]
The estimate $(\ref{strichartz sobolev})$ then follows by using the fact that $\gamma_{p,q}>0$ for all $(p,q)$ is admissible satisfying $q<\infty$. 
\end{proof}
\subsection{Nonlinear estimates} \label{subsection fractional derivative}
In this subsection, we recall some nonlinear fractional derivative estimates related to our purpose. Let us start with the following fractional Leibniz rule (or Kato-Ponce inequality).
\begin{proposition} \label{prop fractional leibniz}
Let $\gamma \geq 0, 1<r<\infty$ and $1<p_1, p_2, q_1, q_2 \leq \infty$ satisfying
\[
\frac{1}{r}=\frac{1}{p_1}+\frac{1}{q_1}=\frac{1}{p_2}+\frac{1}{q_2}.
\]
Then there exists $C=C(d,\gamma, r, p_1, q_1, p_2, q_2)>0$  such that for all $u, v \in \Sh$,
\begin{align}
\|\Lambda^\gamma(uv)\|_{L^r} &\leq C \Big(\|\Lambda^\gamma u\|_{L^{p_1}} \|v\|_{L^{q_1}} + \|u\|_{L^{p_2}}\|\Lambda^\gamma v\|_{L^{q_2}} \Big), \label{leibniz homogeneous sobolev} \\
\|\scal{\Lambda}^\gamma(uv)\|_{L^r} &\leq C \Big(\|\scal{\Lambda}^\gamma u\|_{L^{p_1}} \|v\|_{L^{q_1}} + \|u\|_{L^{p_2}}\|\scal{\Lambda}^\gamma v\|_{L^{q_2}} \Big). \label{leibniz inhomogeneous sobolev} 
\end{align}
\end{proposition}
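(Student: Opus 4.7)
The plan is to establish the homogeneous version \eqref{leibniz homogeneous sobolev} first via a Littlewood-Paley / paraproduct decomposition, and then deduce the inhomogeneous version \eqref{leibniz inhomogeneous sobolev} from it together with $L^r$ Hölder estimates. Write $u = \sum_N P_N u$, $v = \sum_M P_M v$ (with $N,M \in 2^{\Z}$) and decompose $uv$ using Bony's formula into three pieces: a low-high paraproduct $\Pi_1(u,v) := \sum_{N \ll M} P_N u \, P_M v$, a symmetric high-low paraproduct $\Pi_2(u,v) := \Pi_1(v,u)$, and a diagonal remainder $R(u,v) := \sum_{N \sim M} P_N u \, P_M v$. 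Apply $\Lambda^\gamma$ to each piece separately and control it in $L^r$.

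For the low-high piece, I would use that $P_N u \, P_M v$ has Fourier support in an annulus of size $\sim M$ whenever $N \ll M$, so $\Lambda^\gamma(\Pi_1(u,v)) \sim \sum_M (\sum_{N \ll M} P_N u) \, \Lambda^\gamma P_M v$. The inner sum equals a low-frequency cut-off of $u$, which is pointwise dominated by the Hardy-Littlewood maximal function $\mathcal{M}u$. Then the Littlewood-Paley square function characterization of $L^{q_2}$ gives
\[
\|\Lambda^\gamma \Pi_1(u,v)\|_{L^r} \lesssim \bigl\| \mathcal{M}u \cdot \bigl( \sum_M |\Lambda^\gamma P_M v|^2 \bigr)^{1/2} \bigr\|_{L^r} \lesssim \|u\|_{L^{p_2}} \|\Lambda^\gamma v\|_{L^{q_2}},
\]
by Hölder and the maximal inequality (using $p_2 > 1$). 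The high-low piece $\Pi_2(u,v)$ is handled symmetrically and yields the first term on the right of \eqref{leibniz homogeneous sobolev}.

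The main obstacle is the diagonal/remainder term $R(u,v)$, where $\Lambda^\gamma$ can fall on either factor since $N \sim M$. Here I would write $\Lambda^\gamma R(u,v) = \sum_K P_K \Lambda^\gamma \bigl( \sum_{N \sim M, N\gtrsim K} P_N u \, P_M v \bigr)$; the key point is that only dyadic blocks with $N \gtrsim K$ contribute to the $K$-th output block. Distributing the derivative as $\Lambda^\gamma \sim N^\gamma$ on one factor and using $\ell^2$-almost-orthogonality in the $P_K$ decomposition, Cauchy-Schwarz in $N$ together with the Fefferman-Stein vector-valued maximal inequality absorbs a geometric series in $K/N$ and produces a bound by $\|\Lambda^\gamma u\|_{L^{p_1}} \|\mathcal{M}v\|_{L^{q_1}} + \|\mathcal{M}u\|_{L^{p_2}} \|\Lambda^\gamma v\|_{L^{q_2}}$. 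This is the one step where the hypothesis $1 < r < \infty$ and $1 < p_i, q_i \le \infty$ (with at worst one endpoint $\infty$ per Hölder pair) is genuinely used.

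To pass from \eqref{leibniz homogeneous sobolev} to \eqref{leibniz inhomogeneous sobolev}, write $\scal{\Lambda}^\gamma = (1+\Lambda^2)^{\gamma/2}$ and split via the low-frequency projector $P_{\le 1}$ and high-frequency projector $P_{>1}$: on high frequencies $\scal{\Lambda}^\gamma$ is comparable to $\Lambda^\gamma$ up to a bounded Fourier multiplier, so the homogeneous estimate applies directly; on low frequencies $\scal{\Lambda}^\gamma$ is bounded on every $L^r$, so one just invokes Hölder's inequality on $\|uv\|_{L^r}$. Combining the two contributions and re-absorbing the low-frequency part of each factor by the maximal function (to match the norms on the right-hand side) gives \eqref{leibniz inhomogeneous sobolev}. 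Since this proposition is a classical Kato-Ponce inequality, I would in practice just cite the standard references (e.g., the work of Kato-Ponce, Grafakos-Oh, or Muscalu-Schlag) rather than reproduce the paraproduct argument in full.
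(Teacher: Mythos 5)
The paper does not actually prove Proposition~\ref{prop fractional leibniz}: it treats the Kato--Ponce inequality as a black box and simply refers to Grafakos--Oh for the proof. Your paraproduct sketch (Bony decomposition into low-high, high-low, and diagonal pieces, maximal-function domination of the low-frequency sums, Littlewood--Paley square function and Fefferman--Stein for the diagonal remainder, then reduction of the inhomogeneous case to the homogeneous one plus a trivial low-frequency bound) is the standard route and is essentially correct in spirit, so you are not taking a different approach from the paper so much as supplying an outline that the paper deliberately omits. Your closing remark that in practice one would just cite the references is exactly what the paper does.

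One point worth flagging in your sketch: the statement allows one exponent in each H\"older pair to equal $\infty$ (e.g.\ $q_1=\infty$ with $p_1=r$), and in that range the vector-valued maximal and square-function machinery you invoke does not apply verbatim in $L^\infty$. The $L^\infty$ endpoint is in fact the nontrivial contribution of Grafakos--Oh; handling it requires either a more delicate pointwise/sparse-type estimate or a separate argument, not a direct application of Fefferman--Stein with one exponent at infinity. If you wanted to turn your outline into a complete proof you would need to isolate that endpoint case, or restrict to $1<p_i,q_i<\infty$ and then deduce the endpoint by a limiting or duality argument as Grafakos--Oh do. As written, the sketch is a correct proof of the non-endpoint case and a reasonable pointer to the literature for the full statement, which matches the level of detail the paper intends.
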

We refer to \cite{GrafakosOh} for the proof of above inequalities and more general results. We also have the following fractional chain rule. 
\begin{proposition}\label{prop fractional chain}
Let $F \in C^1(\C, \C)$ and $G \in C(\C, \R^+)$ such that $F(0)=0$ and 
\[
|F'( \theta z+ (1-\theta) \zeta)| \leq \mu(\theta) (G(z)+G(\zeta)), \quad z,\zeta \in \C, \quad 0 \leq \theta \leq 1,
\]
where $\mu \in L^1((0,1))$. Then for $\gamma \in (0,1)$ and $1 <r, p <\infty$, $1 <q \leq \infty$ satisfying 
\[
\frac{1}{r}=\frac{1}{p}+\frac{1}{q},
\] 
there exists $C=C(d,\mu, \gamma, r, p,q)>0$ such that for all $u \in \Sh$,
\begin{align}
\|\Lambda^\gamma F(u) \|_{L^r} &\leq C \|F'(u) \|_{L^q} \|\Lambda^\gamma u \|_{L^p},\label{chain homogeneous sobolev} \\
\|\scal{\Lambda}^\gamma F(u) \|_{L^r} &\leq C \|F'(u) \|_{L^q} \|\scal{\Lambda}^\gamma u \|_{L^p}. \label{chain inhomogeneous sobolev} 
\end{align}
\end{proposition}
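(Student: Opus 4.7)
The plan is to reduce both inequalities to the classical Christ--Weinstein fractional chain rule for the homogeneous derivative $\Lambda^\gamma$, and then upgrade to the inhomogeneous norm by the standard Littlewood--Paley equivalence. For $(\ref{chain homogeneous sobolev})$, I would begin from the pointwise singular integral representation
\[
\Lambda^\gamma f(x) = c_{d,\gamma}\,\mathrm{p.v.}\!\int_{\R^d}\frac{f(x)-f(y)}{|x-y|^{d+\gamma}}\,dy,
\]
valid for $\gamma \in (0,1)$, together with the first-order Taylor identity
\[
F(u(x)) - F(u(y)) = \Big(\int_0^1 F'\bigl(\theta u(x)+(1-\theta)u(y)\bigr)\,d\theta\Big)(u(x)-u(y)).
\]
The hypothesis on $F'$ then produces the pointwise domination $|F(u(x))-F(u(y))| \leq \|\mu\|_{L^1}\bigl(G(u(x))+G(u(y))\bigr)|u(x)-u(y)|$.

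Next I would run a Hedberg-type argument: split the defining integral of $\Lambda^\gamma F(u)(x)$ at a radius $R>0$, bound the inner piece ($|x-y|<R$) by $R^{1-\gamma}\bigl(G(u(x))+\Mc(G(u))(x)\bigr)\Mc(|\Lambda u|)(x)$ using H\"older in $y$ and the pointwise relation between $|u(x)-u(y)|$ and $\Lambda u$, and the outer piece ($|x-y|\ge R$) by $R^{-\gamma}\Mc(F(u))(x)$; optimizing in $R$ yields a pointwise inequality of schematic form
\[
|\Lambda^\gamma F(u)(x)| \lesssim G(u)(x)\,\Mc(\Lambda^\gamma u)(x) + \Mc(G(u))(x)\,|\Lambda^\gamma u(x)|.
\]
H\"older with $\tfrac1r = \tfrac1p+\tfrac1q$ together with the Hardy--Littlewood maximal inequality on $L^p$ and $L^q$ (both exponents lie in $(1,\infty]$) then closes $(\ref{chain homogeneous sobolev})$; in the applications of this paper one takes $G = |F'|$ and so the form with $F'(u)$ on the right-hand side is recovered.

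For $(\ref{chain inhomogeneous sobolev})$, I would invoke the equivalence $\|\scal{\Lambda}^\gamma f\|_{L^r} \sim \|f\|_{L^r} + \|\Lambda^\gamma f\|_{L^r}$, available for $\gamma \in (0,1)$ and $1<r<\infty$. The $\Lambda^\gamma$ piece is already controlled by $(\ref{chain homogeneous sobolev})$. For the $L^r$ piece, the identity $F(u) = \int_0^1 F'(\theta u)\,u\,d\theta$ (using $F(0)=0$) combined with the hypothesis taken at $\zeta = 0$ gives $|F(u(x))| \leq \|\mu\|_{L^1}\bigl(G(u(x))+G(0)\bigr)|u(x)|$, and H\"older together with the embedding $\|u\|_{L^p} \leq \|\scal{\Lambda}^\gamma u\|_{L^p}$ finishes the estimate.

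The main technical obstacle is justifying the Hedberg-type pointwise bound at the endpoint $q=\infty$, where the splitting radius must be chosen so that no logarithmic loss is incurred, and verifying that the argument goes through for $\C$-valued $F$ that is merely $\R$-linearly differentiable (rather than holomorphic); the latter is handled by decomposing into real and imaginary parts. Since the homogeneous case is by now classical, in practice one would simply cite the Christ--Weinstein chain rule for $(\ref{chain homogeneous sobolev})$ and write out only the short reduction to the inhomogeneous version.
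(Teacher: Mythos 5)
The paper does not actually prove this proposition; it only cites Christ--Weinstein (and Staffilani) for $(\ref{chain homogeneous sobolev})$ and Taylor, Proposition 5.1, for $(\ref{chain inhomogeneous sobolev})$. Your reduction of the inhomogeneous estimate to the homogeneous one, via the equivalence $\|\scal{\Lambda}^\gamma f\|_{L^r} \sim \|f\|_{L^r}+\|\Lambda^\gamma f\|_{L^r}$ and the hypothesis evaluated at $\zeta=0$, is a sound alternative to citing Taylor, modulo the identification of $G$ with $|F'|$ (which is what one does in the applications).

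The sketch you give for $(\ref{chain homogeneous sobolev})$ itself, however, has a genuine gap. The inner-piece bound appeals to ``the pointwise relation between $|u(x)-u(y)|$ and $\Lambda u$,'' but there is no such pointwise relation: $\Lambda=(-\Delta)^{1/2}$ is nonlocal, and the natural object that controls $|u(x)-u(y)|/|x-y|$ after averaging is $\Mc(\nabla u)(x)$, not $\Mc(|\Lambda u|)(x)$, and certainly not $\Mc(\Lambda^\gamma u)(x)$, which is what appears in your target pointwise inequality. Moreover, even granting bounds of the form $R^{1-\gamma}(\cdot)\Mc(|\Lambda u|)(x)$ on the near piece and $R^{-\gamma}\Mc(F(u))(x)$ on the far piece, optimizing over $R$ produces an interpolant like $(\Mc(\Lambda u)(x))^{\gamma}(\Mc(F(u))(x))^{1-\gamma}$, not $\Mc(\Lambda^\gamma u)(x)$; the subsequent H\"older step would then require separate control of $\Lambda u$ and $F(u)$ in Lebesgue norms, which is not what the proposition supplies. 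The proofs in Christ--Weinstein, Staffilani and Taylor do not go through a pointwise Hedberg bound; they rely on a Littlewood--Paley (dyadic frequency) decomposition and paraproduct-type estimates, or equivalently a square-function characterization of $\dot{H}^\gamma_p$. That is a genuinely different and nontrivial piece of analysis that cannot be compressed into the pointwise estimate you sketch. Your closing remark that in practice one would cite Christ--Weinstein and only write the inhomogeneous reduction is well taken, and is indeed what the paper does, but the part of the proof you did attempt to sketch does not go through as written.
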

We refer the reader to \cite{ChristWeinstein} (see also \cite{Staffilani}) for the proof of $(\ref{chain homogeneous sobolev})$ and \cite[Proposition 5.1]{Taylor} for $(\ref{chain inhomogeneous sobolev})$. Combining the fractional Leibniz rule and the fractional chain rule, one has the following result (see \cite[Appendix]{Kato95}).
\begin{lemma} \label{lem nonlinear estimates}
Let $F \in C^k(\C, \C), k \in \N \backslash \{0\}$. Assume that there is $\nu \geq k$  such that 
\[
|D^iF (z)| \leq C |z|^{\nu-i}, \quad z \in \C, \quad i=1,2,...., k.
\]
Then for $\gamma \in [0,k]$ and $1 <r, p <\infty$, $1 <q \leq \infty$ satisfying $\frac{1}{r}=\frac{1}{p}+\frac{\nu-1}{q}$, there exists $C=C(d, \nu, \gamma, r,p,q)>0$ such that for all $u \in \Sh$,
\begin{align}
\|\Lambda^\gamma F(u)\|_{L^r} &\leq C \|u\|^{\nu-1}_{L^q} \|\Lambda^\gamma u \|_{L^p}, \label{nonlinear homogeneous sobolev} \\
\|\scal{\Lambda}^\gamma F(u) \|_{L^r} & \leq C\|u\|^{\nu-1}_{L^q} \|\scal{\Lambda}^\gamma u\|_{L^p}. \label{nonlinear inhomogeneous sobolev}
\end{align}
Moreover, if $F$ is a polynomial in $u$ and $\overline{u}$, then $(\ref{nonlinear homogeneous sobolev})$ and $(\ref{nonlinear inhomogeneous sobolev})$ hold true for any $\gamma \geq 0$.
\end{lemma}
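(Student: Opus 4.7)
The plan is to prove both $(\ref{nonlinear homogeneous sobolev})$ and $(\ref{nonlinear inhomogeneous sobolev})$ simultaneously by induction on the integer $k$, using Proposition \ref{prop fractional chain} in the base case $k=1$ and the Leibniz rule of Proposition \ref{prop fractional leibniz} together with the induction hypothesis in the step $k-1 \to k$. The analogous estimates with $\scal{\Lambda}^\gamma$ follow by the same argument using the inhomogeneous forms $(\ref{leibniz inhomogeneous sobolev})$ and $(\ref{chain inhomogeneous sobolev})$, so throughout I would focus on $(\ref{nonlinear homogeneous sobolev})$ and indicate the inhomogeneous version at the end.

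For the base case $k=1$, $\gamma\in[0,1]$: when $\gamma=0$, integrating $F'$ along the segment from $0$ to $u$ gives the pointwise bound $|F(u)|\leq C|u|^\nu$, and H\"older with exponents satisfying $1/r=1/p+(\nu-1)/q$ closes the estimate. When $\gamma\in(0,1]$, I would apply Proposition \ref{prop fractional chain} to $F$, verifying its hypothesis with $G(z)=|z|^{\nu-1}$ and a constant $\mu$ via the elementary bound $|F'(\theta z+(1-\theta)\zeta)|\leq C|\theta z+(1-\theta)\zeta|^{\nu-1}\leq C'(|z|^{\nu-1}+|\zeta|^{\nu-1})$, valid since $\nu-1\geq 0$. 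Taking H\"older exponents so that $\|F'(u)\|_{L^{q/(\nu-1)}}$ appears on the right, a second H\"older step bounds this by $C\|u\|^{\nu-1}_{L^q}$, completing the base case.

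For the inductive step, fix $\gamma\in(k-1,k]$. Using $\|\Lambda^\gamma f\|_{L^r}\sim\|\Lambda^{\gamma-1}\nabla f\|_{L^r}$ (Mikhlin multiplier theorem) together with $\nabla F(u)=F_z(u)\nabla u+F_{\bar z}(u)\nabla\bar u$, I would apply Proposition \ref{prop fractional leibniz} at regularity $\gamma-1$ to the typical piece $F_z(u)\nabla u$:
\[
\|\Lambda^{\gamma-1}(F_z(u)\nabla u)\|_{L^r}\lesssim \|\Lambda^{\gamma-1}F_z(u)\|_{L^{q_1}}\|\nabla u\|_{L^{p_1}}+\|F_z(u)\|_{L^{q_2}}\|\Lambda^{\gamma-1}\nabla u\|_{L^{p_2}}.
\]
The choice $p_2=p$, $q_2=q/(\nu-1)$ turns the second term into $C\|u\|^{\nu-1}_{L^q}\|\Lambda^\gamma u\|_{L^p}$ after H\"older. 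For the first term I would note that $F_z\in C^{k-1}$ satisfies $|D^i F_z(z)|\leq C|z|^{(\nu-1)-i}$ with $\nu-1\geq k-1$, so the induction hypothesis at regularity $\gamma-1\in[0,k-1]$ applies and yields $\|\Lambda^{\gamma-1}F_z(u)\|_{L^{q_1}}\leq C\|u\|^{\nu-2}_{L^{\tilde q}}\|\Lambda^{\gamma-1}u\|_{L^{\tilde p}}$. Fixing $\tilde q=q$ and tuning $\tilde p, p_1$ so that Gagliardo--Nirenberg provides $\|\Lambda^{\gamma-1}u\|_{L^{\tilde p}}\|\nabla u\|_{L^{p_1}}\leq C\|u\|_{L^q}\|\Lambda^\gamma u\|_{L^p}$ then recovers the desired bound.

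The principal difficulty lies in this last bookkeeping: one must exhibit a simultaneous choice of exponents $\tilde p,\tilde q,p_1,q_1$ satisfying the scaling constraints $1/r=1/p_1+1/q_1$, $1/q_1=1/\tilde p+(\nu-2)/\tilde q$, and the Gagliardo--Nirenberg interpolation between $L^q$ and $\dot H^\gamma_p$. This is routine linear algebra in the H\"older exponents, but is the place where one must check that all admissibility conditions (finite exponents, $1<\cdot<\infty$) are consistent with the hypotheses of Propositions \ref{prop fractional leibniz} and \ref{prop fractional chain}. The inhomogeneous estimate $(\ref{nonlinear inhomogeneous sobolev})$ follows from the same induction using the $\scal{\Lambda}$-versions $(\ref{leibniz inhomogeneous sobolev})$ and $(\ref{chain inhomogeneous sobolev})$. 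Finally, for a polynomial $F$ in $u$ and $\bar u$, the function is a finite sum of monomials $u^a\bar u^b$ with $a+b\leq\nu$, and iterating Proposition \ref{prop fractional leibniz} directly on each monomial (padding with H\"older to match the power $\nu-1$ in the $L^q$ norm) gives $(\ref{nonlinear homogeneous sobolev})$ and $(\ref{nonlinear inhomogeneous sobolev})$ for every $\gamma\geq 0$, bypassing the restriction $\gamma\leq k$.
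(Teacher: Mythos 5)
Your induction (chain rule for the base case $k=1$, then $\Lambda^\gamma\sim\Lambda^{\gamma-1}\nabla$ plus the Kato--Ponce rule and Gagliardo--Nirenberg interpolation for the step) is correct, and the exponent bookkeeping you flag does close: with $\tilde q=q$, $1/\tilde p=\frac{1}{\gamma}\frac{1}{q}+\frac{\gamma-1}{\gamma}\frac{1}{p}$ and $1/p_1=\frac{\gamma-1}{\gamma}\frac{1}{q}+\frac{1}{\gamma}\frac{1}{p}$ all constraints are met. This is essentially the paper's own route: the paper gives no proof but states that the lemma follows by combining Propositions \ref{prop fractional leibniz} and \ref{prop fractional chain}, citing the appendix of \cite{Kato95}, which is exactly the argument you reconstruct.
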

\begin{corollary} \label{coro fractional derivatives}
Let $F(z)=|z|^{\nu-1}z$ with $\nu>1$, $\gamma \geq 0$ and $1 <r, p <\infty$, $1 <q \leq \infty$ satisfying $\frac{1}{r}=\frac{1}{p}+\frac{\nu-1}{q}$. 
\begin{itemize}
\item[i.] If $\nu$ is an odd integer or, otherwise, if $\lceil \gamma \rceil \leq \nu$, then there exists $C=C(d,\nu, \gamma, r, p, q)>0$ such that for all $u \in \Sh$,
\[
\|F(u)\|_{\dot{H}^\gamma_r}  \leq C \|u\|^{\nu-1}_{L^q} \|u\|_{\dot{H}^\gamma_p}.
\]
A similar estimate holds with $\dot{H}^\gamma_r, \dot{H}^\gamma_p$-norms are replaced by $H^\gamma_r, H^\gamma_p$-norms respectively.
\item[ii.] If $\nu$ is an odd integer or, otherwise, if $\lceil \gamma \rceil \leq \nu$, then there exists $C= C(d,\nu,\gamma, r, p, q)>0$ such that for all $u, v \in \Sh$,
\begin{multline}
\|F(u)-F(v)\|_{\dot{H}^\gamma_r}  \leq C \Big( (\|u\|^{\nu-1}_{L^q} + \|v\|^{\nu-1}_{L^q}) \|u-v\|_{\dot{H}^\gamma_p} \Big. \\
\Big.+ (\|u\|^{\nu-2}_{L^q} +\|v\|^{\nu-2}_{L^q})(\|u\|_{\dot{H}^\gamma_p} + \|v\|_{\dot{H}^\gamma_p}) \|u-v\|_{L^q} \Big). \nonumber
\end{multline}
A similar estimate holds with $\dot{H}^\gamma_r, \dot{H}^\gamma_p$-norms are replaced by $H^\gamma_r, H^\gamma_p$-norms respectively.
\end{itemize}
\end{corollary}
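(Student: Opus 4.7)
The plan is to derive both parts from Lemma~\ref{lem nonlinear estimates}, treating the odd-integer and general cases in parallel.

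For part~(i), when $\nu$ is an odd integer I would first observe that $F(z) = |z|^{\nu-1} z = z^{(\nu+1)/2} \bar z^{(\nu-1)/2}$ is a polynomial in $(z, \bar z)$, so the ``moreover'' clause of Lemma~\ref{lem nonlinear estimates} yields the desired bound for every $\gamma \geq 0$ immediately. When $\nu$ is not an odd integer, I would set $k = \lceil \gamma \rceil$ and verify that the smoothness hypothesis $\lceil \gamma \rceil \leq \nu$ forces $F \in C^k(\C,\C)$ together with the pointwise derivative bounds $|D^i F(z)| \leq C |z|^{\nu-i}$ for $i=1,\dots,k$: each differentiation of $|z|^{\nu-1}z$ lowers the effective power of $|z|$ by exactly one, and $\nu - i \geq 0$ for $i\leq k$ keeps every derivative continuous. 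Lemma~\ref{lem nonlinear estimates} with this $k$ and $\gamma \in [0,k]$ then yields the homogeneous estimate, and the inhomogeneous version follows identically via~(\ref{nonlinear inhomogeneous sobolev}).

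For part~(ii), the key is the integral representation
\[
F(u) - F(v) = (u-v) \int_0^1 \partial_z F(w_\theta)\, d\theta + \overline{(u-v)} \int_0^1 \partial_{\bar z} F(w_\theta)\, d\theta, \qquad w_\theta := \theta u + (1-\theta) v,
\]
followed by the fractional Leibniz rule (Proposition~\ref{prop fractional leibniz}) applied to each product. Compatible with the hypothesis $\tfrac{1}{r} = \tfrac{1}{p} + \tfrac{\nu-1}{q}$, I would use two H\"older splittings. The splitting $(p_1,q_1) = (p, q/(\nu-1))$ puts $\Lambda^\gamma$ on $u-v$; combined with the pointwise bound $|\partial_z F(z)|, |\partial_{\bar z} F(z)| \leq C|z|^{\nu-1}$ and Minkowski in $\theta$, this contributes $(\|u\|_{L^q}^{\nu-1} + \|v\|_{L^q}^{\nu-1}) \|u-v\|_{\dot H^\gamma_p}$. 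The splitting $(p_2,q_2) = (q, \tilde q)$ with $\tfrac{1}{\tilde q} = \tfrac{1}{p} + \tfrac{\nu-2}{q}$ puts $\Lambda^\gamma$ on the nonlinear factor; after moving $\Lambda^\gamma$ inside the $\theta$-integral by Minkowski, Lemma~\ref{lem nonlinear estimates} applied to $\partial_z F$ (whose derivatives satisfy $|D^i \partial_z F(z)| \leq C|z|^{\nu-1-i}$) on each slice $w_\theta$ produces the second claimed factor $(\|u\|_{L^q}^{\nu-2} + \|v\|_{L^q}^{\nu-2})(\|u\|_{\dot H^\gamma_p} + \|v\|_{\dot H^\gamma_p}) \|u-v\|_{L^q}$. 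Summing the $\partial_z$ and $\partial_{\bar z}$ contributions gives the claim; the inhomogeneous $H^\gamma_r$ variant follows by replacing $\Lambda$ with $\scal{\Lambda}$ and invoking~(\ref{leibniz inhomogeneous sobolev}) and~(\ref{nonlinear inhomogeneous sobolev}) throughout.

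The step I anticipate as the main obstacle is this last application of Lemma~\ref{lem nonlinear estimates} to $\partial_z F$: the effective homogeneity drops from $\nu$ to $\nu-1$, so a naive invocation demands $\lceil \gamma \rceil \leq \nu - 1$, strictly stronger than the corollary's hypothesis $\lceil \gamma \rceil \leq \nu$ at the borderline case $\lceil \gamma \rceil = \nu$ (with $\nu$ a non-odd integer). In that case I would re-open the proof of Lemma~\ref{lem nonlinear estimates} and run the Kato--Ponce plus fractional-chain-rule argument directly on $\int_0^1 \partial_z F(w_\theta)\, d\theta$, exploiting the averaging in $\theta$ and the vanishing of $\partial_z F$ at the origin to absorb the missing endpoint smoothness. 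All other steps reduce to bookkeeping exponents under the admissibility relation $\tfrac{1}{r}=\tfrac{1}{p}+\tfrac{\nu-1}{q}$, which is routine.
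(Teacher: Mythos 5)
The paper gives no proof for this corollary; it is stated immediately after Lemma~\ref{lem nonlinear estimates} as a direct consequence of it, so there is no written argument to compare against. Your derivation is the natural one and is sound for part~(i): in the odd-integer case $F$ is a polynomial in $z,\overline{z}$ and the ``moreover'' clause of Lemma~\ref{lem nonlinear estimates} applies for every $\gamma\geq 0$; otherwise taking $k=\lceil\gamma\rceil$, checking $|D^iF(z)|\lesssim|z|^{\nu-i}$ for $i=1,\dots,k$ (which is exactly where $\lceil\gamma\rceil\leq\nu$ enters), and invoking the lemma with that $k$ gives the claim.

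For part~(ii) the difference formula and Kato--Ponce splitting you propose are the correct route, but the endpoint gap you flag is genuine and your patch does not close it. Putting $\Lambda^\gamma$ on the nonlinear factor forces an application of Lemma~\ref{lem nonlinear estimates} to $\partial_z F$ (and $\partial_{\overline{z}}F$), whose effective homogeneity is $\nu-1$; this demands $\lceil\gamma\rceil\leq\nu-1$, strictly stronger than the stated $\lceil\gamma\rceil\leq\nu$ precisely when $\nu$ is an even integer and $\lceil\gamma\rceil=\nu$. Averaging in $\theta$ and the vanishing $\partial_z F(0)=0$ do not repair this: the obstruction is the pointwise failure of $C^{\lceil\gamma\rceil}$ smoothness of $\partial_z F$ at the origin, a property of the nonlinearity itself and not of the profile $w_\theta$, so superposition over $\theta$ cannot restore it. Notice that the paper is in fact consistent with your diagnosis: in the body of the proofs item~(ii) is only ever invoked at $\gamma=0$ (estimates $(\ref{subcritical 2})$ and $(\ref{critical 2})$), and Remark~\ref{rem continuous dependence} --- the one place where the difference estimate is used at the level of $\gamma$ derivatives --- explicitly strengthens the hypothesis to $\lceil\gamma\rceil\leq\nu-1$. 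The clean resolution is therefore to state item~(ii) with $\lceil\gamma\rceil\leq\nu-1$ when $\nu$ is not an odd integer; under that hypothesis your argument goes through without the hand-wave and matches how the paper actually uses the result.
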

A next result will give a good control on the nonlinear term which allows us to use the contraction mapping argument.
\begin{lemma} \label{lem nonlinear control}
Let $\nu$ be as in \emph{Theorem } $\ref{theorem critical}$ and $\Gace$ as in $(\ref{critical half wave})$. Then
\[
\|u\|^{\nu-1}_{L^{\nu-1}(\R, L^\infty)} \lesssim
\left\{ 
\begin{array}{ll}
\|u\|^4_{L^4(\R,\dot{B}^{\Gace-\gamma_{4,\infty}}_{\infty}} \|u\|^{\nu-5}_{L^\infty(\R,\dot{B}^{\Gace}_{2})} &\text{ when } d=2, \\
\|u\|^p_{L^p(\R,\dot{B}^{\Gace-\gamma_{p,p^\star}}_{p^\star})}\|u\|^{\nu-1-p}_{L^\infty(\R,\dot{B}^{\Gace}_{2})}  &\text{ when } d=3, \\
\|u\|^2_{L^2(\R,\dot{B}^{\Gace-\gamma_{2,2^\star}}_{2^\star})}\|u\|^{\nu-3}_{L^\infty(\R,\dot{B}^{\Gace}_{2})}  &\text{ when } d\geq 4,
\end{array}
\right.
\] 
where $2<p<\nu-1, p^\star= 2p/(p-2)$ and $2^\star= 2(d-1)/(d-3)$.
\end{lemma}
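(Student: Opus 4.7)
The plan is to prove a pointwise-in-time interpolation inequality estimating $\|u(t)\|_{L^\infty_x}$ by a geometric mean of $\|u(t)\|_{\dot{B}^\Gact_2}$ (the energy-type piece) and the dyadic Strichartz Besov norm, and then conclude by H\"older in time. Throughout, let $(p,q)$ denote the admissible pair from the statement: $(4,\infty)$ if $d=2$, $(p,p^\star)$ if $d=3$, and $(2,2^\star)$ if $d\geq 4$; set $s:=\Gact-\gamma_{p,q}$ for the regularity index on the Strichartz side.

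First I would bound $\|u\|_{L^\infty_x}\leq\sum_N\|P_N u\|_{L^\infty_x}$ via the homogeneous Littlewood--Paley decomposition and split the dyadic sum at a threshold $N_0$ to be optimized. For $N\leq N_0$, Bernstein gives $\|P_N u\|_{L^\infty}\lesssim N^{d/2-\Gact}\cdot N^\Gact\|P_N u\|_{L^2}$, and a Cauchy--Schwarz against the $\ell^2$ Besov structure used in this paper---valid because $d/2-\Gact=1/(\nu-1)>0$---produces a contribution $\lesssim N_0^{d/2-\Gact}\|u\|_{\dot{B}^\Gact_2}$. For $N>N_0$, Bernstein gives $\|P_N u\|_{L^\infty}\lesssim N^{d/q-s}\cdot N^s\|P_N u\|_{L^q}$, and a second Cauchy--Schwarz yields $\lesssim N_0^{d/q-s}\|u\|_{\dot{B}^s_q}$ \emph{provided} the exponent $d/q-s$ is negative. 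Using $\gamma_{p,q}=d/2-d/q-1/p$ and $\Gact=d/2-1/(\nu-1)$ one computes $d/q-s=1/(\nu-1)-1/p$, which is negative precisely under the assumptions of Theorem \ref{theorem critical}: $\nu>5$ when $p=4$, $p<\nu-1$ when $d=3$, and $\nu>3$ when $p=2$.

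Equating the two contributions $N_0^{d/2-\Gact}A=N_0^{d/q-s}B$, with $A:=\|u\|_{\dot{B}^\Gact_2}$ and $B:=\|u\|_{\dot{B}^s_q}$, yields the optimal choice $N_0=(B/A)^p$ since the difference of the two exponents equals $1/p$. Substituting back gives the pointwise interpolation
\[
\|u(t)\|_{L^\infty_x}\lesssim\|u(t)\|_{\dot{B}^\Gact_2}^{\,1-p/(\nu-1)}\,\|u(t)\|_{\dot{B}^s_q}^{\,p/(\nu-1)}.
\]
Raising to the power $\nu-1$ produces $\|u\|_{L^\infty_x}^{\nu-1}\lesssim\|u\|_{\dot{B}^\Gact_2}^{\nu-1-p}\,\|u\|_{\dot{B}^s_q}^{p}$, and H\"older in time---placing the Strichartz factor in $L^p_t$ and the energy-type factor in $L^\infty_t$---immediately delivers the three displayed estimates (with $p=4$, $p\in(2,\nu-1)$, $p=2$ in the respective dimensional ranges).

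The main, and essentially only, technical point is the sign verification $d/q-s<0$: this is exactly what the hypotheses of Theorem \ref{theorem critical} provide, and it is what forces the thresholds $\nu>5$ for $d=2$ and $\nu>3$ for $d\geq 3$. A minor subtlety worth flagging is that the Besov spaces in this paper are $\ell^2$-based (i.e.\ $\dot{B}^\gamma_q=\dot{B}^\gamma_{q,2}$), so Cauchy--Schwarz rather than an $\ell^1$ summation converts the dyadic sums into the Besov norms; the low-frequency geometric series converges unconditionally from $\Gact<d/2$, while the high-frequency one converges exactly under the assumed sign condition.
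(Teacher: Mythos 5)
Your argument is correct, and it is genuinely different from (and simpler than) the one in the paper. Your route is a threshold Littlewood--Paley decomposition at a dyadic scale $N_0$: Bernstein to $L^2$ below $N_0$, Bernstein to $L^q$ above, Cauchy--Schwarz against the $\ell^2$-Besov structure, and optimization of $N_0$. This yields the pointwise-in-time interpolation $\|u(t)\|_{L^\infty}\lesssim \|u(t)\|_{\dot B^{\Gact}_2}^{1-p/(\nu-1)}\|u(t)\|_{\dot B^s_q}^{p/(\nu-1)}$ in one stroke, after which H\"older in time is immediate. The paper instead follows the Colliander--Keel--Staffilani--Takaoka--Tao scheme (via Hong--Sire): it reduces by interpolation to $\nu-1=m/n$ rational, raises $\sum_N\|P_N u\|_{L^\infty}$ to the $m$-th power, orders the frequencies, estimates the $n$ highest by one Bernstein bound and the rest by an interpolated bound with an extra parameter $\theta=1/(\nu-2)$, introduces ``fattened'' sequences $\tilde c_N,\tilde c'_N$ to decouple the frequency products, and finally applies H\"older and Minkowski in $\ell^2(2^\Z)$. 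Your approach avoids the rationality reduction, the parameter $\theta$, and the fattened-sequence machinery, while arriving at exactly the same pointwise interpolation estimate. It also isolates the role of the hypotheses on $\nu$ more transparently: the single sign condition $d/q-s=1/(\nu-1)-1/p<0$ is precisely what makes the high-frequency tail summable.

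Two small points are glossed over but harmless. First, the optimal $N_0=(B/A)^p$ need not be a power of two; one should round to the nearest dyadic, which only costs a constant, and the degenerate cases $A=0$ or $B=0$ force $u(t)=0$. Second, the bound $\|u\|_{L^\infty}\le\sum_N\|P_N u\|_{L^\infty}$ implicitly requires that the homogeneous Littlewood--Paley decomposition reconstructs $u$ in $L^\infty$; the paper addresses this by noting that once the dyadic sum is finite it converges in $L^\infty$ and agrees with $u$ modulo polynomials, and the polynomial vanishes since $u(t)\in L^2$. It would be worth flagging this reconstruction step explicitly.
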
 
The above lemma follows the same spirit as in \cite[Lemma 3.5]{HongSire} (see also \cite{Dinh}) using the argument of \cite[Lemma 3.1]{CoKeStaTaTao}. 
\begin{proof}
We only give a sketch of the proof in the case $d\geq 4$, the cases $d=2, 3$ are treated similarly. By interpolation, we can assume that $\nu-1= m/n >2, m,n \in \N$ with $\gcd(m,n)=1$. We proceed as in \cite{HongSire} and set
\[
c_N(t)= N^{\Gact-\gamma_{2,2^\star}}\|P_N u(t)\|_{L^{2^\star}(\R^d)}, \quad c'_N(t) = N^{\Gact}\|P_N u(t)\|_{L^2(\R^d)}.
\] 
By Bernstein's inequality, we have
\begin{align}
\|P_Nu(t)\|_{L^\infty(\R^d)}  &\lesssim N^{\frac{d}{2^\star}-\Gact+\gamma_{2,2^\star}} c_N(t)=N^{\frac{n}{m}-\frac{1}{2}} c_N(t), \label{first estimate} \\
\|P_Nu(t)\|_{L^\infty(\R^d)} &\lesssim  N^{\frac{d}{2}-\Gact} c'_N(t)=N^{\frac{n}{m}} c'_N(t). \nonumber
\end{align}
This implies that for $\theta \in (0,1)$ which will be chosen later,
\begin{align}
\|P_Nu(t)\|_{L^\infty(\R^d)} \lesssim  N^{\frac{n}{m}-\frac{\theta}{2}} (c_N(t))^\theta (c'_N(t))^{1-\theta}. \label{second estimate}
\end{align}
We next use
\[
A(t) := \Big( \sum_{N \in 2^{\Z}} \|P_Nu(t)\|_{L^\infty(\R^d)}\Big)^m \lesssim \sum_{N_1 \geq \cdots \geq N_m} \prod_{j=1}^{m} \|P_{N_j}u(t)\|_{L^\infty(\R^d)}.
\]
Estimating the $n$ highest frequencies by $(\ref{first estimate})$ and the rest by $(\ref{second estimate})$, we get
\[
A(t) \lesssim \sum_{N_1 \geq \cdots \geq N_m} \Big( \prod_{j=1}^{n}N_j^{\frac{n}{m} -\frac{1}{2}} c_{N_j}(t) \Big) \Big( \prod_{j=n+1}^{m} N_j^{\frac{n}{m} -\frac{\theta}{2}} (c_{N_j}(t))^\theta (c'_{N_j}(t))^{1-\theta}\Big).
\]
For an arbitrary $\delta >0$, we set
\[
\tilde{c}_N(t) = \sum_{N' \in 2^{\Z}} \min(N/N', N'/N)^\delta c_{N'}(t),\quad \tilde{c}'_N(t) = \sum_{N' \in 2^{\Z}} \min(N/N', N'/N)^\delta c'_{N'}(t).
\]
Using the fact that $c_N(t) \leq \tilde{c}_N(t)$ and $\tilde{c}_{N_j}(t) \lesssim (N_1/N_j)^\delta\tilde{c}_{N_1}(t)$ for $j=2,...,m$ and similar estimates for primes, we see that
\begin{multline*}
A(t)\lesssim \sum_{N_1 \geq \cdots \geq N_m} \Big( \prod_{j=1}^{n}N_j^{\frac{n}{m} -\frac{1}{2}} (N_1/N_j)^\delta \tilde{c}_{N_1}(t) \Big) \\
\times \Big( \prod_{j=n+1}^{m} N_j^{\frac{n}{m}-\frac{\theta}{2}} (N_1/N_j)^\delta(\tilde{c}_{N_1}(t))^\theta (\tilde{c}'_{N_1}(t))^{1-\theta}\Big).
\end{multline*}
We can rewrite the above quantity in the right hand side as
\begin{multline*}
\sum_{N_1 \geq \cdots \geq N_m} \Big(\prod_{j=n+1}^{m}N_j^{\frac{n}{m}-\frac{\theta}{2}-\delta}\Big) \Big(\prod_{j=2}^{n} N_j^{\frac{n}{m}-\frac{1}{2}-\delta}\Big) N_1^{\frac{n}{m}-\frac{1}{2}+(m-1)\delta} (\tilde{c}_{N_1}(t))^{n+(m-n)\theta} \\
\times (\tilde{c}'_{N_1}(t))^{(m-n)(1-\theta)}.
\end{multline*}
By choosing $\theta = 1/(\nu-2) \in (0,1)$ and $\delta >0$ so that 
\[
\frac{n}{m}-\frac{\theta}{2}-\delta>0, \quad \frac{n}{m}-\frac{1}{2}+(m-1)\delta<0 \quad \text{or} \quad \delta <\frac{m-2n}{2m(m-1)}.
\]
Here condition $\nu>3$ ensures that $m-2n>0$. Summing in $N_m$, then in $N_{m-1}$,..., then in $N_2$, we have
\[
A(t) \lesssim \sum_{N_1 \in 2^{\Z}} (\tilde{c}_{N_1}(t))^{2n} (\tilde{c}'_{N_1}(t))^{(\nu-3)n}.
\]
The H\"older inequality with the fact that $(\nu-3)n \geq 1$ implies 
\begin{align}
A(t) &\lesssim \|(\tilde{c}(t))^{2n}\|_{\ell^2(2^{\Z})} \|(\tilde{c}'(t))^{(\nu-3)n}\|_{\ell^2(2^{\Z})} \nonumber \\
&= \|\tilde{c}(t)\|^{2n}_{\ell^{4n}(2^{\Z})} \|\tilde{c}'(t)\|^{(\nu-3)n}_{\ell^{2(\nu-3)n}(2^{\Z})} \leq \|\tilde{c}(t)\|^{2n}_{\ell^2(2^{\Z})}\|\tilde{c}'(t)\|^{(\nu-3)n}_{\ell^2(2^{\Z})}, \nonumber
\end{align}
where $\|\tilde{c}(t)\|_{\ell^q(2^\Z)}:=\Big(\sum_{N\in 2^\Z} |\tilde{c}_N(t)|^q\Big)^{1/q}$ and similarly for $\|\tilde{c}'(t)\|_{\ell^q(2^\Z)}$. The Minkowski inequality then implies
\[
A(t) \lesssim \|c(t)\|^{2n}_{\ell^2(2^{\Z})} \|c'(t)\|^{(\nu-3)n}_{\ell^2(2^{\Z})}.
\]
This implies that $A(t)<\infty$ for amost allwhere $t$, hence that $\sum_{N} \|P_Nu(t)\|_{L^\infty(\R^d)} <\infty$. Therefore $\sum_{N} P_Nu(t)$ converges in $L^\infty(\R^d)$. Since it converges to $u$ in the ditribution sense, so the limit is $u(t)$.
Thus
\begin{align}
\|u\|^{\nu-1}_{L^{\nu-1}(\R,L^\infty(\R^d))} &= \int_\R \|u(t)\|^{m/n}_{L^\infty(\R^d)} dt \lesssim \int_\R \|c(t)\|^2_{\ell^2(2^{\Z})} \|c'(t)\|^{\nu-3}_{\ell^2(2^{\Z})}dt \nonumber \\
&\lesssim \|c\|^2_{L^{2}_\R\ell^2(2^{\Z})} \|c'\|^{\nu-3}_{L^\infty_\R \ell^2(2^{\Z})} = \|u\|^2_{L^{2}(\R,\dot{B}^{\Gact-\gamma_{2,2^\star}}_{2^\star}(\R^d))}\|u\|^{\nu-3}_{L^\infty(\R,\dot{B}^{\Gact}_2(\R^d))}. \nonumber
\end{align}
The proof is complete.
\end{proof}
\subsection{Proof of Theorem $\ref{theorem subcritical}$}
We now give the proof of Theorem $\ref{theorem subcritical}$ by using the standard fixed point argument in a suitable Banach space. Thanks to $(\ref{subcritical assuption})$, we are able to choose $p> \max(\nu-1, 4)$ when $d=2$ and $p> \max(\nu-1, 2)$ when $d\geq 3$ such that $\gamma > d/2 - 1/p$ and then choose $q\in [2,\infty)$ such that
\[
\frac{2}{p} + \frac{d-1}{q} \leq  \frac{d-1}{2}.
\]
\textbf{Step 1.} Existence. Let us consider
\[
X := \Big\{ u \in L^\infty(I,H^\gamma) \cap L^p(I,H^{\gamma - \gamma_{p,q}}_q) \ | \ \|u\|_{L^\infty(I, H^\gamma)} + \|u\|_{L^p(I, H^{\gamma-\gamma_{p,q}}_q)} \leq M \Big\},
\]
equipped with the distance
\[
d(u,v):= \|u-v\|_{L^\infty(I, L^2)} + \|u-v\|_{L^p(I,H^{-\gamma_{p,q}}_q)},
\]
where $I =[0,T]$ and $M, T>0$ to be chosen later. By the Duhamel formula, it suffices to prove that the functional 
\begin{align}
\Phi(u)(t) = e^{it\Lambda} u_0 +i\mu \int_0^t e^{i(t-s)\Lambda} |u(s)|^{\nu-1} u(s)ds \label{duhamel formula schrodinger}
\end{align}
is a contraction on $(X,d)$. The Strichartz estimate $(\ref{strichartz sobolev})$ yields
\begin{align*}
\|\Phi(u)\|_{L^\infty(I,H^\gamma)}+\|\Phi(u)\|_{L^p(I, H^{\gamma-\gamma_{p,q}}_q)} &\lesssim \|u_0\|_{H^\gamma}+ \|F(u)\|_{L^1(I,H^\gamma)}, \\
\|\Phi(u)-\Phi(v)\|_{L^\infty(I,L^2)} +\|\Phi(u)-\Phi(v)\|_{L^p(I,H^{-\gamma_{p,q}}_q)} &\lesssim \|F(u)-F(v)\|_{L^1(I,L^2)},
\end{align*}
where $F(u) = |u|^{\nu-1} u$ and similarly for $F(v)$. By our assumptions on $\nu$, Corollary $\ref{coro fractional derivatives}$ gives
\begin{align}
\|F(u)\|_{L^1(I,H^\gamma)} &\lesssim \|u\|^{\nu-1}_{L^{\nu-1}(I,L^\infty)} \|u\|_{L^\infty(I,H^\gamma)} \nonumber \\
&\lesssim  T^{1-\frac{\nu-1}{p}} \|u\|^{\nu-1}_{L^p(I,L^\infty)} \|u\|_{L^\infty(I,H^\gamma)}, \label{subcritical 1} 
\end{align}
and
\begin{align}
\|F(u)-F(v)\|&_{L^1(I,L^2)} \lesssim \Big(\|u\|^{\nu-1}_{L^{\nu-1}(I,L^\infty)} + \|v\|^{\nu-1}_{L^{\nu-1}(I,L^\infty)}\Big) \|u-v\|_{L^\infty(I,L^2)} \nonumber \\
&\lesssim T^{1-\frac{\nu-1}{p}} \Big(\|u\|^{\nu-1}_{L^p(I,L^\infty)} + \|v\|^{\nu-1}_{L^p(I,L^\infty)}\Big) \|u-v\|_{L^\infty(I,L^2)}. \label{subcritical 2} 
\end{align}
The Sobolev embedding with the fact that $\gamma - \gamma_{p,q} > d/q$ implies $L^p(I,H^{\gamma - \gamma_{p,q}}_q) \subset L^p(I,L^\infty)$. Thus, we get
\[
\|\Phi(u)\|_{L^\infty(I,H^\gamma)}+\|\Phi(u)\|_{L^p(I, H^{\gamma-\gamma_{p,q}}_q)} \lesssim \|u_0\|_{H^\gamma}+ T^{1-\frac{\nu-1}{p}} \|u\|^{\nu-1}_{L^p(I,H^{\gamma-\gamma_{p,q}}_q)} \|u\|_{L^\infty(I,H^\gamma)}, 
\]
and
\[
d(\Phi(u),\Phi(v))  \lesssim T^{1-\frac{\nu-1}{p}} \Big(\|u\|^{\nu-1}_{L^p(I,H^{\gamma-\gamma_{p,q}}_q)} + \|v\|^{\nu-1}_{L^p(I,H^{\gamma-\gamma_{p,q}}_q)}\Big) \|u-v\|_{L^\infty(I,L^2)}. 
\]
This shows that for all $u,v \in X$, there exists $C>0$ independent of $u_0 \in H^\gamma$ and $T$ such that
\begin{align*}
\|\Phi(u)\|_{L^\infty(I,H^\gamma)}+\|\Phi(u)\|_{L^p(I, H^{\gamma-\gamma_{p,q}}_q)} &\leq C\|u_0\|_{H^\gamma} + C T^{1-\frac{\nu-1}{p}} M^{\nu}, \\
d(\Phi(u),\Phi(v))  &\leq C T^{1-\frac{\nu-1}{p}} M^{\nu-1} d(u,v).
\end{align*}
Therefore, if we set $M=2C\|u_0\|_{H^\gamma}$ and choose $T>0$ small enough so that $C T^{1-\frac{\nu-1}{p}} M^{\nu-1} \leq \frac{1}{2}$, then $X$ is stable by $\Phi$ and $\Phi$ is a contraction on $X$. By the fixed point theorem, there exists a unique $u \in X$ so that $\Phi(u)=u$. \newline
\noindent \textbf{Step 2.} Uniqueness. Consider $u, v \in C(I,H^\gamma) \cap L^p(I, L^\infty)$ two solutions of the (NLHW). Since the uniqueness is a local property (see \cite[Chapter 4]{Cazenave}), it suffices to show $u=v$ for $T$ is small. We have from $(\ref{subcritical 2})$ that
\[
d(u,v) \leq C T^{1-\frac{\nu-1}{p}} \Big( \|u\|^{\nu-1}_{L^p(I,L^\infty)} + \|v\|^{\nu-1}_{L^p(I,L^\infty)}\Big) d(u,v).
\] 
Since $\|u\|_{L^p(I, L^\infty)}$ is small if $T$ is small and similarly for $v$, we see that if $T>0$ small enough, 
\[
d(u,v) \leq \frac{1}{2} d(u,v) \text{ or } u=v.
\]
\noindent \textbf{Step 3.} Item i. Since the time of existence constructed in Step 1 only depends on $H^\gamma$-norm of the initial data. The blowup alternative follows by standard argument (see e.g. \cite[Chapter 4]{Cazenave}). \newline
\noindent \textbf{Step 4.} Item ii. Let $u_{0,n} \rightarrow u_0$ in $H^\gamma$ and $C, T=T(u_0)$ be as in Step 1. Set $M=4C\|u_0\|_{H^\gamma}$. It follows that $2C\|u_{0,n}\|_{H^\gamma} \leq M$ for sufficiently large $n$. Thus the solution $u_n$ constructed in Step 1 belongs to $X$ with $T=T(u_0)$ for $n$ large enough. We have from Strichartz estimate $(\ref{strichartz sobolev})$ and $(\ref{subcritical 1})$ that
\[
\|u\|_{L^a(I,H^{\gamma-\gamma_{a,b}}_b)} \lesssim \|u_0\|_{H^\gamma}+T^{1-\frac{\nu-1}{p}} \|u\|^{\nu-1}_{L^p(I,L^\infty)} \|u\|_{L^\infty(I,H^\gamma)},
\]
provided $(a,b)$ is admissible and $b<\infty$. This shows the boundedness of $u_n$ in $L^a(I,H^{\gamma-\gamma_{a,b}}_b)$. We also have from $(\ref{subcritical 2})$ and the choice of $T$ that
\[
d(u_n, u) \leq C\|u_{0,n}-u_0\|_{L^2}+\frac{1}{2}d(u_n,u) \text{ or } d(u_n, u) \leq 2C\|u_{0,n}-u_0\|_{L^2}.
\]
This yields that $u_n \rightarrow u$ in $L^\infty(I,L^2)\cap L^p(I,H^{-\gamma_{p,q}}_q)$. Strichartz estimate $(\ref{strichartz sobolev})$ again implies that $u_n\rightarrow u$ in $L^a(I,H^{-\gamma_{a,b}}_b)$ for any admissible pair $(a,b)$ with $b<\infty$. The convergence in $C(I,H^{\gamma-\ep})$ follows from the boundedness in $L^\infty(I,H^\gamma)$, the convergence in $L^\infty(I,L^2)$ and that $\|u\|_{H^{\gamma-\ep}} \leq \|u\|^{1-\frac{\ep}{\gamma}}_{H^\gamma}\|u\|^{\frac{\ep}{\gamma}}_{L^2}$. \newline
\noindent \textbf{Step 5.} Item iii. If $u_0 \in H^\beta$ for some $\beta >\gamma$ satisfying $\lceil \beta \rceil \leq \nu$ if $\nu>1$ is not an odd integer, then Step 1 shows the existence of $H^\beta$ solution defined on some maximal interval $[0,T)$. Since $H^\beta$ solution is also a $H^\gamma$ solution, thus $T\leq T^*$. Suppose that $T<T^*$. Then the unitary property of $e^{it\Lambda}$ and Lemma $ $ imply that
\[
\|u(t)\|_{H^\beta} \leq \|u_0\|_{H^\beta} + C \int_0^t \|u(s)\|^{\nu-1}_{L^\infty} \|u(s)\|_{H^\beta} ds,
\]
for all $0\leq t<T$. The Gronwall's inequality then gives
\[
\|u(t)\|_{H^\beta} \leq \|u_0\|_{H^\beta} \exp \Big( C \int_0^t \|u(s)\|^{\nu-1}_{L^\infty} ds \Big),
\]
for all $0\leq t<T$. Using the fact that  $u \in L^{\nu-1}_{\text{loc}}([0,T^*),L^\infty)$, we see that $\limsup \|u(t)\|_{H^\beta} <\infty$ as $t\rightarrow T$ which is a contradiction to the blowup alternative in $H^\beta$.
\defendproof
\begin{remark} \label{rem continuous dependence}
If we assume that $\nu>1$ is an odd integer or 
\[
\lceil \gamma \rceil \leq \nu-1
\]
otherwise, then the continuous dependence holds in $C(I,H^\gamma)$. To see this, we consider $X$ as above equipped with the following metric
\[
d(u,v):= \|u-v\|_{L^\infty(I,H^\gamma)} + \|u-v\|_{L^p(I,H^{\gamma-\gamma_{p,q}}_q)}.
\]
Using Item (ii) of Corollary $\ref{coro fractional derivatives}$, we have
\begin{multline*}
\|F(u)-F(v)\|_{L^1(I,H^\gamma)} \lesssim  (\|u\|^{\nu-1}_{L^{\nu-1}(I,L^\infty)} +\|v\|^{\nu-1}_{L^{\nu-1}(I,L^\infty)}) \|u-v\|_{L^\infty(I,H^\gamma)} \\ + (\|u\|^{\nu-2}_{L^{\nu-1}(I,L^\infty)}+\|v\|^{\nu-2}_{L^{\nu-1}(I,L^\infty)})(\|u\|_{L^\infty(I,H^\gamma)} + \|v\|_{L^\infty(I,H^\gamma)})\|u-v\|_{L^{\nu-1}(I,L^\infty)}.
\end{multline*}
The Sobolev embedding then implies for all $u, v \in X$,
\[
d(\Phi(u),\Phi(v)) \lesssim T^{1-\frac{\nu-1}{p}} M^{\nu-1} d(u,v).
\]
Therefore, the continuity in $C(I,H^\gamma)$ follows as in Step 4.
\end{remark}
\subsection{Proof of Theorem $\ref{theorem critical}$}
We now turn to the proof of the local well-posedness and small data scattering in critical case by following the same argument as in \cite{Dinh}.  \newline
\textbf{Step 1.} Existence. We only treat for $d\geq 4$, the ones for $d=2, d=3$ are completely similar. Let us consider
\begin{multline*}
X := \Big\{ u \in L^\infty(I,H^{\Gact}) \cap L^2(I,B^{\Gact-\gamma_{2,2^\star}}_{2^\star}) \ | \ 
\|u\|_{L^\infty(I,\dot{H}^{\Gact})} \leq M, \\
\|u\|_{L^2(I,\dot{B}^{\Gact-\gamma_{2,2^\star}}_{2^\star})} \leq N \Big\},
\end{multline*}
equipped with the distance
\[
d(u,v):= \|u-v\|_{L^\infty(I,L^2)} + \|u-v\|_{L^2(I,\dot{B}^{-\gamma_{2,2^\star}}_{2^\star})},
\]
where $I=[0,T]$ and $T, M, N>0$ will be chosen later. One can check (see e.g. \cite{CazenaveWeissler} or \cite{Cazenave}) that $(X,d)$ is a complete metric space. Using the Duhamel formula
\begin{align}
\Phi(u)(t) = e^{it\Lambda} u_0 +i\mu \int_0^t e^{i(t-s)\Lambda} |u(s)|^{\nu-1} u(s)ds =: u_{\text{hom}}(t)+ u_{\text{inh}}(t), 
\label{duhamel formula}
\end{align}
the Strichartz estimate $(\ref{homogeneous strichartz})$ yields
\[
\|u_{\text{hom}}\|_{L^2(I,\dot{B}^{\Gact-\gamma_{2,2^\star}}_{2^\star})} \lesssim \|u_0\|_{\dot{H}^{\Gact}}.
\]
A similar estimate holds for $\|u_{\text{hom}}\|_{L^\infty(I,\dot{H}^{\Gact})}$. We see that $\|u_{\text{hom}}\|_{L^2(I,\dot{B}^{\Gact-\gamma_{2,2^\star}}_{2^\star})} \leq \varepsilon$ for some $\varepsilon>0$ small enough which will be chosen later, provided that either $\|u_0\|_{\dot{H}^{\Gact}}$ is small or it is satisfied some $T>0$ small enough by the dominated convergence theorem. Therefore, we can take $T=\infty$ in the first case and $T$ be this finite time in the second. On the other hand, using again $(\ref{homogeneous strichartz})$, we have
\[
\|u_{\text{inh}}\|_{L^2(I,\dot{B}^{\Gact-\gamma_{2,2^\star}}_{2^\star})} \lesssim \|F(u)\|_{L^1(I,\dot{H}^{\Gact})}.
\]
A same estimate holds for $\|u_{\text{inh}}\|_{L^\infty(I,\dot{H}^{\Gact})}$. Corollary $\ref{coro fractional derivatives}$ and Lemma $\ref{lem nonlinear control}$ give
\begin{align}
\|F(u)\|_{L^1(I,\dot{H}^{\Gact})} &\lesssim  \|u\|^{\nu-1}_{L^{\nu-1}(I,L^\infty)} \|u\|_{L^\infty(I,\dot{H}^{\Gact})} \nonumber \\
&\lesssim \|u\|^2_{L^2(I,\dot{B}^{\Gact-\gamma_{2,2^\star}}_{2^\star})}\|u\|^{\nu-2}_{L^\infty(I,\dot{H}^{\Gact})}. \label{critical 1} 
\end{align}
Similarly, we have
\begin{align}
\|F(u)-F(v)&\|_{L^1(I,L^2)} \lesssim \Big(\|u\|^{\nu-1}_{L^{\nu-1}(I,L^\infty)} + \|v\|^{\nu-1}_{L^{\nu-1}(I,L^\infty)} \Big) \|u-v\|_{L^\infty(I,L^2)} \label{critical 2} \\
&\lesssim   \Big(\|u\|^2_{L^2(I,\dot{B}^{\Gact-\gamma_{2,2^\star}}_{2^\star})} \|u\|^{\nu-3}_{L^\infty(I,\dot{H}^{\Gact})} + \|v\|^2_{L^2(I,\dot{B}^{\Gact-\gamma_{2,2^\star}}_{2^\star})} \|v\|^{\nu-3}_{L^\infty(I,\dot{H}^{\Gact})}\Big)  \nonumber \\
&\mathrel{\phantom{\|_{L^1(I,L^2)} \lesssim \Big(\|u\|^{\nu-1}_{L^{\nu-1}(I,L^\infty)} + \|v\|^{\nu-1}_{L^{\nu-1}(I,L^\infty)} \Big)}} \times \|u-v\|_{L^\infty(I,L^2)}.\nonumber
\end{align}
This implies for all $u, v \in X$, there exists $C>0$ independent of $u_0 \in H^{\Gact}$ such that
\begin{align*}
\|\Phi(u)\|_{L^2(I,\dot{B}^{\Gact-\gamma_{2,2^\star}}_{2^\star})} &\leq \varepsilon +CN^2M^{\nu-2}, \\
\|\Phi(u)\|_{L^\infty(I,\dot{H}^{\Gact})} &\leq C\|u_0\|_{\dot{H}^{\Gact}} +CN^2M^{\nu-2}, \\
d(\Phi(u),\Phi(v)) &\leq CN^2M^{\nu-3} d(u,v).
\end{align*}
Now by setting $N=2\varepsilon$ and $M=2C\|u_0\|_{\dot{H}^{\Gact}}$ and choosing $\varepsilon>0$ small enough such that $CN^2M^{\nu-3} \leq \min\{1/2, \varepsilon/M \}$, we see that $X$ is stable by $\Phi$ and $\Phi$ is a contraction on $X$. By the fixed point theorem, there exists a unique solution $u \in X$ to the (NLHW). Note that when $\|u_0\|_{\dot{H}^{\Gact}}$ is small enough, we can take $T=\infty$. \newline
\textbf{Step 2.} Uniqueness. The uniqueness in $C^\infty(I,H^{\Gact}) \cap L^2(I, B^{\Gact-\gamma_{2,2^\star}}_{2^\star})$ follows as in Step 2 of the proof of Theorem $\ref{theorem subcritical}$ using $(\ref{critical 2})$. Here $\|u\|_{L^2(I,\dot{B}^{\Gact-\gamma_{2,2^\star}}_{2^\star})}$ can be small as $T$ is small.\newline
\textbf{Step 3.} Scattering. The global existence when $\|u_0\|_{\dot{H}^{\Gact}}$ is small is given in Step 1. It remains to show the scattering property. Thanks to $(\ref{critical 1})$, we see that
\begin{align}
\|e^{-it_2\Lambda}u(t_2)- e^{-it_1\Lambda}u(t_1)\|_{\dot{H}^{\Gact}} &=\Big\|i\mu \int_{t_1}^{t_2} e^{-is\Lambda} (|u|^{\nu-1}u)(s) ds\Big\|_{\dot{H}^{\Gact}} \nonumber \\
&\leq \|F(u)\|_{L^1([t_1,t_2], \dot{H}^{\Gact})} \nonumber \\
&\lesssim \|u\|^2_{L^2([t_1,t_2],\dot{B}^{\Gact-\gamma_{2,2^\star}}_{2^\star})}\|u\|^{\nu-2}_{L^\infty([t_1,t_2],\dot{H}^{\Gact})} \rightarrow 0 \label{scattering 1}
\end{align}
as $t_1, t_2 \rightarrow + \infty$. We have from $(\ref{critical 2})$ that
\begin{align}
\|e^{-it_2\Lambda}u(t_2)- e^{-it_1\Lambda}u(t_1)\|_{L^2} &\lesssim  \|u\|^2_{L^2([t_1,t_2],\dot{B}^{\Gact-\gamma_{2,2^\star}}_{2^\star})}\|u\|^{\nu-3}_{L^\infty([t_1,t_2],\dot{H}^{\Gact})} \nonumber \\
& \mathrel{\phantom{\lesssim  \|u\|^2_{L^2([t_1,t_2],\dot{B}^{\Gact-\gamma_{2,2^\star}}_{2^\star})}}} \times \|u\|_{L^\infty([t_1,t_2],L^2)}, \label{scattering 2}
\end{align}
which also tends to zero as $t_1, t_2 \rightarrow +\infty$. This implies that the limit 
\[
u_0^+:= \lim_{t \rightarrow + \infty} e^{-it\Lambda} u(t)
\]
exists in $H^{\Gact}$. Moreover, we have
\[
u(t)-e^{it\Lambda}u_0^+ =-i\mu\int_{t}^{+ \infty} e^{i(t-s)\Lambda}F(u(s)) ds. 
\]
The unitary property of $e^{it\Lambda}$ in $L^2$, $(\ref{scattering 1})$ and $(\ref{scattering 2})$ imply that $\|u(t)-e^{it\Lambda}u_0^+\|_{H^{\Gact}} \rightarrow 0$ when $t \rightarrow + \infty$. This completes the proof of Theorem $\ref{theorem critical}$.
\defendproof
\section{Ill-posedness}
In this section, we will give the proof of Theorem $\ref{theorem ill-posedness}$. We follow closely the argument of \cite{ChristCollianderTao} using small dispersion analysis and decoherence arguments.
\subsection{Small dispersion analysis}
Now let us consider for $0<\delta \ll 1$ the following equation
\begin{align}
\left\{
\begin{array}{rcl}
i\partial_t \phi(t,x) + \delta\Lambda \phi(t,x)&=&-\mu |\phi|^{\nu-1} \phi(t,x), \quad (t, x) \in \R \times \R^d, \\
\phi(0,x) &=& \phi_0(x), \quad x\in \R^d.
\end{array}
\right.
\label{small dispersion}
\end{align}
Note that $(\ref{small dispersion})$ can be transformed back to the (NLHW) by using
\[
u(t,x):= \phi(t,\delta x).
\]
\begin{lemma} \label{lem small dispersion analysis}
Let $k>d/2$ be an integer. If $\nu$ is not an odd integer, then we assume also the additional regularity condition $\nu \geq k+1$. Let $\phi_0$ be a Schwartz function. Then there exists $C, c>0$ such that if $0<\delta\leq c$ sufficiently small, then there exists a unique solution $\phi^{(\delta)} \in C([-T,T],H^k)$ of $(\ref{small dispersion})$ with $T=c|\log \delta|^c$ satisfying
\begin{align}
\|\phi^{(\delta)}(t)-\phi^{(0)}(t)\|_{H^k} \leq C\delta^{1/2}, \label{small dispersion estimate}
\end{align}
for all $|t|\leq c|\log \delta|^c$, where 
\[
\phi^{(0)}(t,x):= \phi_0(x) \exp(-i\mu t|\phi_0(x)|^{\nu-1})
\]
is the solution of $(\ref{small dispersion})$ with $\delta=0$.
\end{lemma}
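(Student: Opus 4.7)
\medskip

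\noindent\textbf{Proof plan.} The strategy is a bootstrap/energy argument for the difference $w:=\phi^{(\delta)}-\phi^{(0)}$, treating the dispersion $\delta\Lambda$ as a small perturbation of the pure ODE $i\partial_t\phi=-\mu|\phi|^{\nu-1}\phi$ whose solution is the given $\phi^{(0)}$. First I would record the basic properties of $\phi^{(0)}$: since $\phi^{(0)}(t,x)=\phi_0(x)\exp(-i\mu t|\phi_0(x)|^{\nu-1})$ with $\phi_0\in\Sh$, applying the chain and Leibniz rules shows that $\phi^{(0)}(t)$ is smooth in $x$ for every $t$ and satisfies a polynomial bound
\[
\|\phi^{(0)}(t)\|_{H^{k+1}}\leq C_0\scal{t}^{A}
\]
for some $A=A(k,\nu,\phi_0)$ and $C_0=C_0(k,\nu,\phi_0)$, with the regularity assumption $\nu\geq k+1$ (for non-odd $\nu$) ensuring that the Moser-type bound of Lemma~\ref{lem nonlinear estimates}/Corollary~\ref{coro fractional derivatives} applies at level $H^{k+1}$. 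In particular $\|\delta\Lambda\phi^{(0)}(t)\|_{H^k}\leq C_0\delta\scal{t}^{A}$.

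Next I would derive the equation for $w$: subtracting the two equations gives
\[
i\partial_t w+\delta\Lambda w = -\mu\bigl(F(\phi^{(\delta)})-F(\phi^{(0)})\bigr) - \delta\Lambda\phi^{(0)},\qquad w(0)=0,
\]
where $F(z)=|z|^{\nu-1}z$. Since $\Lambda$ is self-adjoint and real, the standard $H^k$ energy estimate (commuting $\scal{\Lambda}^k$ through the equation and pairing with $\scal{\Lambda}^k w$) yields
\[
\tfrac{d}{dt}\|w(t)\|_{H^k} \leq \|F(\phi^{(\delta)})-F(\phi^{(0)})\|_{H^k} + \delta\|\Lambda\phi^{(0)}(t)\|_{H^k}.
\]
Using Corollary~\ref{coro fractional derivatives}(ii) together with the Sobolev embedding $H^k\hookrightarrow L^\infty$ (valid because $k>d/2$), the nonlinear difference is controlled by
\[
\|F(\phi^{(\delta)})-F(\phi^{(0)})\|_{H^k}\leq C\bigl(\|\phi^{(\delta)}\|_{H^k}+\|\phi^{(0)}\|_{H^k}\bigr)^{\nu-1}\|w\|_{H^k}.
\]

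Now I would run the bootstrap. Assume on some interval $[0,T_\delta]$ we have $\|w(t)\|_{H^k}\leq 1$, so that $\|\phi^{(\delta)}(t)\|_{H^k}\leq \|\phi^{(0)}(t)\|_{H^k}+1\leq C_0\scal{t}^{A}+1$. Plugging into the energy inequality gives
\[
\tfrac{d}{dt}\|w(t)\|_{H^k}\leq C_1\scal{t}^{B}\|w(t)\|_{H^k} + C_1\delta\scal{t}^{A},
\]
for some $B=(\nu-1)A$. Gronwall then produces
\[
\|w(t)\|_{H^k}\leq C_1\delta\,\scal{t}^{A+1}\exp\bigl(C_1\scal{t}^{B+1}\bigr).
\]
Choosing $T=c|\log\delta|^c$ with $c=c(A,B,C_1)>0$ sufficiently small makes the right-hand side $\leq \delta^{1/2}\ll 1$, which closes the bootstrap and produces the claimed bound $\eqref{small dispersion estimate}$. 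Existence and uniqueness of $\phi^{(\delta)}\in C([-T,T],H^k)$ follow from the same contraction-mapping argument used in Theorem~\ref{theorem subcritical} (Strichartz estimates for $e^{it\delta\Lambda}$ are unchanged under the rescaling $u(t,x)=\phi(t,\delta x)$, and the regularity assumption $\nu\geq k+1$ for non-odd $\nu$ permits the nonlinear estimates at level $H^k$): the a~priori $H^k$ control just obtained rules out blow-up on $[-T,T]$. The negative-time direction is symmetric.

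The main obstacle is not any individual step but the need to carefully track the polynomial-in-$t$ growth of $\|\phi^{(0)}(t)\|_{H^{k+1}}$, since it is this polynomial growth—rather than a uniform bound—that forces the final time of validity to be only logarithmic in $\delta^{-1}$; the regularity hypothesis $\nu\geq k+1$ for non-odd $\nu$ is exactly what is required to make the Moser/fractional-chain estimates available at the integer level $k$ that the subsequent decoherence argument needs.
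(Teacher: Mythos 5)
Your proof is correct and follows essentially the same approach as the paper: the paper defers to Christ--Colliander--Tao (Lemma 2.1) and records the resulting energy-method bound $\|\phi^{(\delta)}(t)-\phi^{(0)}(t)\|_{H^k}\leq C\delta\exp(C(1+|t|)^C)$, from which the $\delta^{1/2}$ estimate follows on $|t|\leq c|\log\delta|^c$. Your bootstrap/Gronwall argument for $w=\phi^{(\delta)}-\phi^{(0)}$, tracking the polynomial growth of $\|\phi^{(0)}(t)\|_{H^{k+1}}$, is precisely the content of that cited energy argument, so the two proofs coincide in substance.
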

\begin{proof}
We refer the reader to \cite[Lemma 2.1]{ChristCollianderTao} where the small dispersion analysis is invented to prove the ill-posedness for the classical nonlinear Schr\"odinger equation. The same proof can be applied to the nonlinear half-wave equation without any difficulty. By using the energy method, we end up with the following estimate
\[
\|\phi^{(\delta)}(t)-\phi^{(0)}(t)\|_{H^k} \leq C \delta \exp(C(1+|t|)^C).
\]
Thus, if $|t| \leq c|\log \delta|^c$ for suitably small $0<\delta \leq c$, then $\exp(C(1+|t|)^C) \leq \delta^{-1/2}$ and $(\ref{small dispersion estimate})$ follows.
\end{proof}
\begin{remark} \label{rem small dispersion analysis}
By the same argument as in \cite{ChristCollianderTao}, we can get the following better estimate
\begin{align}
\|\phi^{(\delta)}(t)-\phi^{(0)}(t)\|_{H^{k,k}} \leq C \delta^{1/2}, \label{small dispersion estimate weighted}
\end{align}
for all $|t| \leq c|\log \delta|^c$, where $H^{k,k}$ is the weighted Sobolev space
\[
\|\phi\|_{H^{k,k}}:= \sum_{|\alpha|=0}^{k} \|\scal{x}^{k-|\alpha|} D^\alpha \phi\|_{L^2}. 
\]
\end{remark}
Now let $\lambda>0$ and set
\begin{align}
u^{(\delta, \lambda)}(t,x):= \lambda^{-\frac{1}{\nu-1}} \phi^{(\delta)}(\lambda^{-1}t, \lambda^{-1} \delta x). \label{define solution}
\end{align}
It is easy to see that $u^{(\delta,\lambda)}$ is a solution of the (NLHW). 
\begin{lemma} \label{lem initial data estimate}
Let $\gamma \in \R$ and $0<\lambda \leq \delta \ll 1$. Let $\phi_0 \in \Sh$ be such that if $\gamma \leq -d/2$,
\[
\hat{\phi}_0(\xi) = O(|\xi|^\kappa) \text{ as } \xi \rightarrow 0,
\]
for some $\kappa >-\gamma-d/2$, where $\hat{\cdot}$ is the Fourier transform. Then there exists $C>0$ such that 
\begin{align}
\|u^{(\delta,\lambda)}(0)\|_{H^\gamma} \leq C \lambda^{\Gace-\gamma} \delta^{\gamma-d/2}. \label{initial data estimate}
\end{align}
\end{lemma}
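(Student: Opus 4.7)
The plan is to reduce the estimate to a direct computation in Fourier variables, using the explicit scaling structure of $u^{(\delta,\lambda)}(0,\cdot)$ together with a two-scale analysis of the Japanese-bracket weight $\langle\cdot\rangle^{2\gamma}$.

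First, I would write down $u^{(\delta,\lambda)}(0,x)=\lambda^{-1/(\nu-1)}\phi_0(\lambda^{-1}\delta x)$ and take its Fourier transform: with $a=\delta/\lambda\geq 1$, one has $\widehat{\phi_0(a\cdot)}(\xi)=a^{-d}\hat{\phi}_0(\xi/a)$. The change of variables $\eta=\xi/a$ in the $H^\gamma$-norm integral then gives
\[
\|u^{(\delta,\lambda)}(0)\|_{H^\gamma}^2
= C\,\lambda^{-2/(\nu-1)}\,(\lambda/\delta)^{d}\int_{\R^d} \langle(\delta/\lambda)\eta\rangle^{2\gamma}|\hat{\phi}_0(\eta)|^2\,d\eta.
\]
Since the prefactor equals $\lambda^{2(\Gact-\gamma)}\delta^{2(\gamma-d/2)}\cdot(\delta/\lambda)^{-2\gamma}$, the estimate $(\ref{initial data estimate})$ reduces to showing the weighted integral is bounded by a constant times $(\delta/\lambda)^{2\gamma}$.

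Next, I would split the $\eta$-integral at the threshold $|\eta|=\lambda/\delta$, using $\langle(\delta/\lambda)\eta\rangle\sim 1$ on the low-frequency region and $\langle(\delta/\lambda)\eta\rangle\sim(\delta/\lambda)|\eta|$ on the high-frequency region. The high-frequency piece always contributes $\lesssim(\delta/\lambda)^{2\gamma}\int |\eta|^{2\gamma}|\hat{\phi}_0(\eta)|^2\,d\eta$, which is finite since $\phi_0\in\Sh$ (in the $\gamma\geq 0$ case we instead bound $\langle(\delta/\lambda)\eta\rangle^{2\gamma}\lesssim(\delta/\lambda)^{2\gamma}\langle\eta\rangle^{2\gamma}$ globally, which is even simpler). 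The low-frequency piece equals $\int_{|\eta|\leq\lambda/\delta}|\hat{\phi}_0(\eta)|^2\,d\eta$ and needs to be controlled by $(\delta/\lambda)^{2\gamma}$.

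The delicate step is the low-frequency bound when $\gamma<0$. For $\gamma\geq 0$ it is trivial (the integrand is bounded and so is the integral). For $-d/2<\gamma<0$ one has $(\delta/\lambda)^{2\gamma}=(\lambda/\delta)^{-2\gamma}$ with $-2\gamma<d$, so the crude bound $\int_{|\eta|\leq\lambda/\delta}|\hat{\phi}_0(\eta)|^2\,d\eta\lesssim(\lambda/\delta)^d$ already gives the claim since $\lambda/\delta\leq 1$. For $\gamma\leq -d/2$ this crude bound is insufficient, and this is where the hypothesis $\hat{\phi}_0(\xi)=O(|\xi|^\kappa)$ with $\kappa>-\gamma-d/2$ enters: it yields
\[
\int_{|\eta|\leq\lambda/\delta}|\hat{\phi}_0(\eta)|^2\,d\eta\lesssim\int_{|\eta|\leq\lambda/\delta}|\eta|^{2\kappa}\,d\eta\lesssim(\lambda/\delta)^{2\kappa+d},
\]
and the requirement $2\kappa+d>-2\gamma$ is exactly what makes $(\lambda/\delta)^{2\kappa+d}\leq(\lambda/\delta)^{-2\gamma}=(\delta/\lambda)^{2\gamma}$ since $\lambda/\delta\leq 1$.

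Combining the two regions gives $\int\langle(\delta/\lambda)\eta\rangle^{2\gamma}|\hat{\phi}_0(\eta)|^2\,d\eta\lesssim(\delta/\lambda)^{2\gamma}$, and substituting back into the scaling identity produces $(\ref{initial data estimate})$. The only nontrivial point is the low-frequency analysis in the very negative range $\gamma\leq -d/2$; this is precisely what the assumption on the order of vanishing of $\hat{\phi}_0$ at the origin is designed to handle.
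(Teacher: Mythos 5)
Your argument is correct and follows the same route as the paper: express $u^{(\delta,\lambda)}(0)$ on the Fourier side, change variables to extract the prefactor $\lambda^{-2/(\nu-1)}(\lambda/\delta)^d$, split the remaining weighted integral at $|\eta|=\lambda/\delta$, and invoke the vanishing order of $\hat{\phi}_0$ at the origin to control the low-frequency piece when $\gamma\leq -d/2$. The only small imprecision is attributing the finiteness of the high-frequency integral $\int |\eta|^{2\gamma}|\hat{\phi}_0(\eta)|^2\,d\eta$ solely to $\phi_0\in\Sh$: for $\gamma\leq -d/2$ the integrand is not locally integrable near the origin for a generic Schwartz function, so there too one needs the hypothesis $\hat{\phi}_0(\xi)=O(|\xi|^\kappa)$ with $\kappa>-\gamma-d/2$ (which you of course have, and which gives $2\gamma+2\kappa>-d$, hence local integrability near $\eta=0$).
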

\begin{proof}
The proof of this lemma is essentially given in \cite{ChristCollianderTao}. For reader's convenience, we give a sketch of the proof. We firstly have
\[
[u^{(\delta,\lambda)}(0)]\hat{\ } (\xi)= \lambda^{-\frac{1}{\nu-1}} (\lambda\delta^{-1})^d \hat{\phi}_0(\lambda\delta^{-1}\xi).
\]
Thus,
\begin{align*}
\|u^{(\delta,\lambda)}(0)\|^2_{H^\gamma}&= \lambda^{-\frac{2}{\nu-1}} (\lambda\delta^{-1})^{2d} \int (1+|\xi|^2)^\gamma |\hat{\phi}_0(\lambda\delta^{-1}\xi)|^2 d\xi \\
&=\lambda^{-\frac{2}{\nu-1}} (\lambda\delta^{-1})^d \int (1+|\lambda^{-1}\delta\xi|^2)^\gamma |\hat{\phi}_0(\xi)|^2 d\xi \\
&\sim \lambda^{-\frac{2}{\nu-1}} (\lambda\delta^{-1})^{d-2\gamma} \int_{|\xi| \geq \lambda\delta^{-1} } |\xi|^{2\gamma} |\hat{\phi}_0(\xi)|^2 d\xi \\
&\mathrel{\phantom{\sim \lambda^{-\frac{2}{\nu-1}} (\lambda\delta^{-1})^{d-2\gamma} \int_{|\xi| \geq \lambda\delta^{-1}}}} + \lambda^{-\frac{8}{\nu-1}} (\lambda\delta^{-1})^d \int_{|\xi| \leq \lambda\delta^{-1} } |\hat{\phi}_0(\xi)|^2 d\xi \\
&= \lambda^{-\frac{2}{\nu-1}} (\lambda\delta^{-1})^{d-2\gamma} \Big(\int_{\R} |\xi|^{2\gamma}|\hat{\phi}_0(\xi)|^2d\xi \\
&\mathrel{\phantom{= \lambda^{-\frac{2}{\nu-1}} (\lambda\delta^{-1})^{d-2\gamma} \Big(\int_{\R} }} - \int_{|\xi|\leq \lambda\delta^{-1}} ( (\lambda\delta^{-1})^{2\gamma} - |\xi|^{2\gamma} )|\hat{\phi}_0(\xi)|^2 d\xi  \Big).
\end{align*}
Using the fact that $\lambda\delta^{-1} \leq 1$, we obtain for $\gamma>-d/2$ that
\[
\|u^{(\delta,\lambda)}(0)\|_{H^\gamma} = c\lambda^{-\frac{1}{\nu-1}} (\lambda\delta^{-1})^{d/2-\gamma} (1+O((\lambda\delta^{-1})^{\gamma+d/2})) \leq C \lambda^{\Gact-\gamma} \delta^{\gamma-d/2},
\]
where $c\ne 0$ provided that $\phi_0$ is not identically zero. Moreover, for $\gamma\leq -d/2$, the assumption on $\hat{\phi}_0$ also implies
\[
\|u^{(\delta,\lambda)}(0)\|_{H^\gamma} \leq C \lambda^{\Gact-\gamma} \delta^{\gamma-d/2}.
\]
Here we use the fact that
\[
\int_{|\xi|\leq \lambda\delta^{-1}} ( (\lambda\delta^{-1})^{2\gamma} - |\xi|^{2\gamma} )|\hat{\phi}_0(\xi)|^2 d\xi \leq C (\lambda \delta^{-1})^{d+2\gamma+2\kappa} \leq C.
\]
This completes the proof of $(\ref{initial data estimate})$.
\end{proof}
\subsection{Proof of Theorem $\ref{theorem ill-posedness}$}
We are now able to prove Theorem $\ref{theorem ill-posedness}$. We only consider the case $t\geq 0$, the one for $t<0$ is similar. Let $\ep \in (0,1]$ be fixed and set
\begin{align}
\lambda^{\Gact-\gamma} \delta^{\gamma-d/2} =:\ep, \label{define epsilon}
\end{align}
equivalently
\[
\lambda=\delta^\theta, \text{ where } \theta=\frac{d/2-\gamma}{\Gact-\gamma}>1.
\]
Note that we are considering here $\gamma<\Gact$. This implies that $0 < \lambda \leq \delta \ll 1$, and Lemma $\ref{lem initial data estimate}$ gives
\[
\|u^{(\delta,\lambda)}(0)\|_{H^\gamma} \leq C\ep.
\]
We now split the proof to several cases.
\paragraph{\bf The case $0<\gamma<\Gact$.} Since the support of $\phi^{(0)}(t,x)$ is independent of $t$, we see that for $t$ large enough, depending on $\gamma$,
\[
\|\phi^{(0)}(t)\|_{H^\gamma} \sim t^\gamma,
\]
whenever $\gamma \geq 0$ provided either $\nu>1$ is an odd integer or $\gamma \leq \nu-1$ otherwise. Thus for $\delta \ll 1$ and $1\ll t \leq c|\log \delta|^c$, $(\ref{small dispersion estimate})$ implies
\begin{align}
\|\phi^{(\delta)}(t)\|_{H^\gamma} \sim t^\gamma. \label{solution estimate}
\end{align}
We next have 
\[
[u^{(\delta,\lambda)}(\lambda t)]\hat{\ }(\xi)= \lambda^{-\frac{1}{\nu-1}} (\lambda\delta^{-1})^d [\phi^{(\delta)}(t)]\hat{\ } (\lambda\delta^{-1}\xi).
\]
This shows that
\begin{align*}
\|u^{(\delta,\lambda)}(\lambda t)\|^2_{H^\gamma} &= \int (1+|\xi|^2)^\gamma |[u^{(\delta,\lambda)}(\lambda t)]\hat{\ } (\xi)|^2 d\xi \\
&= \lambda^{-\frac{2}{\nu-1}} (\lambda\delta^{-1})^d \int (1+|\lambda^{-1}\delta\xi|^2)^\gamma |[\phi^{(\delta)}(t)]\hat{\ } (\xi)|^2 d\xi \\
&\geq \lambda^{-\frac{2}{\nu-1}} (\lambda\delta^{-1})^{d-2\gamma} \int_{|\xi|\geq 1} |\xi|^{2\gamma} |[\phi^{(\delta)}(t)]\hat{\ }(\xi)|^2 d\xi \\
&\geq \lambda^{-\frac{2}{\nu-1}} (\lambda\delta^{-1})^{d-2\gamma} \Big(c\|\phi^{(\delta)}(t)\|^2_{H^\gamma} - C\|\phi^{(\delta)}(t)\|^2_{L^2} \Big).
\end{align*}
Thanks to $(\ref{solution estimate})$, we have $\|\phi^{(\delta)}(t)\|_{L^2} \ll \|\phi^{(\delta)}(t)\|_{H^\gamma}$ for $t \gg 1$. This yields that
\[
\|u^{(\delta,\lambda)}(\lambda t)\|_{H^\gamma} \geq c \lambda^{-\frac{1}{\nu-1}} (\lambda\delta^{-1})^{d/2-\gamma} \|\phi^{(\delta)}(t)\|_{H^\gamma} \geq c \ep t^\gamma,
\]
for $1\ll t \leq c|\log \delta|^c$. We now choose $t=c|\log \delta|^c$ and pick $\delta>0$ small enough so that
\[
\ep t^\gamma > \ep^{-1}, \quad \lambda t < \ep.
\]
Therefore, for any $\varepsilon>0$, there exists a solution of the (NLHW) satisfying 
\[
\|u(0)\|_{H^\gamma} <\varepsilon, \quad \|u(t)\|_{H^\gamma}>\varepsilon^{-1}
\] 
for some $t \in (0,\varepsilon)$. Thus for any $t>0$, the solution map $\Sh \ni u(0) \mapsto u(t)$ for the Cauchy problem (NLHW) fails to be continuous at 0 in the $H^\gamma$-topology.
\paragraph{\bf The case $\gamma=0 <\Gact$.} Let $a, a' \in [1/2,2]$. Let $\phi^{(a,\delta)}$ be the solution to $(\ref{small dispersion})$ with initial data
\[
\phi^{(a,\delta)}(0)=a \phi_0.
\]
Then, Lemma $\ref{lem small dispersion analysis}$ gives
\begin{align}
\|\phi^{(a,\delta)}(t)-\phi^{(a,0)}(t)\|_{H^k} \leq C \delta^{1/2}, \label{small dispersion estimate phi a delta}
\end{align}
for all $|t|\leq c|\log \delta|^c$, where 
\begin{align}
\phi^{(a,0)}(t,x)= a\phi_0(x) \exp (-i\mu a^{\nu-1} t|\phi_0(x)|^{\nu-1}) \label{define phi a zero}
\end{align} 
is the solution of $(\ref{small dispersion})$ with $\delta=0$ and the same initial data as $\phi^{(a,\delta)}$. Note that since $a$ belongs to a compact set, the constant $C, c$ can be taken to be independent of $a$. We next define
\begin{align}
u^{(a,\delta,\lambda)}(t,x):= \lambda^{-\frac{1}{\nu-1}} \phi^{(a,\delta)}(\lambda^{-1}t, \lambda^{-1}\delta x). \label{define u a delta lambda}
\end{align}
It is easy to see that $u^{(a,\delta, \lambda)}$ is also a solution of the (NLHW). 
Using $(\ref{define phi a zero})$, a direct computation shows that
\[
\|\phi^{(a,0)}(t)-\phi^{(a',0)}(t)\|_{L^2} \geq c >0,
\]
for some time $t$ satisfying $|a-a'|^{-1}\leq t\leq c|\log \delta|^c$ provided that $\delta$ is small enough so that $c|\log \delta|^c \geq |a-a'|^{-1}$. The triangle inequality together with $(\ref{small dispersion estimate phi a delta})$ yield
\[
\|\phi^{(a,\delta)}(t)-\phi^{(a',\delta)}(t)\|_{L^2} \geq c,
\]
for all $|a-a'|^{-1} \leq t \leq c|\log \delta|^c$. Now let $\ep$ be as in $(\ref{define epsilon})$, i.e.
\[
\lambda^{-\frac{1}{\nu-1}}(\lambda\delta^{-1})^{d/2} =:\ep,
\]
or $\lambda=\delta^\theta$ with $\theta=\frac{d/2}{\Gact}>1$. Moreover, using the fact 
\[
[u^{(a,\delta,\lambda)}(\lambda t)]\hat{\ } (\xi) = \lambda^{-\frac{1}{\nu-1}} (\lambda\delta^{-1})^{d}[\phi^{(a,\delta)}(t)]\hat{\ }(\lambda\delta^{-1}\xi),
\]
we have
\[
\|u^{(a,\delta,\lambda)}(\lambda t)-u^{(a',\delta,\lambda)}(\lambda t)\|_{L^2} = \lambda^{-\frac{1}{\nu-1}}(\lambda\delta^{-1})^{d/2} \|\phi^{(a,\delta)}(t)-\phi^{(a',\delta)}(t)\|_{L^2} \geq c \ep. 
\]
Similarly, using that
\[
[u^{(a,\delta,\lambda)}(0)]\hat{\ } (\xi)= a \lambda^{-\frac{1}{\nu-1}} (\lambda\delta^{-1})^d \hat{\phi}_0(\lambda \delta^{-1}\xi),
\]
we have
\[
\|u^{(a,\delta,\lambda)}(0)\|_{L^2}, \|u^{(a',\delta,\lambda)}(0)\|_{L^2} \leq C \ep,
\]
and
\[
\|u^{(a,\delta,\lambda)}(0)-u^{(a',\delta,\lambda)}(0)\|_{L^2} \leq C\ep|a-a'|.
\]
Since $|a-a'|$ can be arbitrarily small, this shows that for any $0<\ep, \sigma <1$ and for any $t>0$, there exist $u_1, u_2$ solutions of the (NLHW) with initial data $u_1(0), u_2(0) \in \Sh$ such that
\[
\|u_1(0)\|_{L^2}, \|u_2(0)\|_{L^2} \leq C\ep, \quad \|u_1(0)-u_2(0)\|_{L^2} \leq C\sigma, \quad \|u_1(t)-u_2(t)\|_{L^2} \geq c\ep.
\]
This shows that the solution map fails to be uniformly continuous on $L^2$.
\paragraph{\bf The case $\gamma \leq -d/2$ and $\gamma<\Gact$.} 
Let $u^{(\delta,\lambda)}$ be as in $(\ref{define solution})$. Thanks to $(\ref{define epsilon})$, we have 
\[
\|u^{(\delta,\lambda)}(0)\|_{H^\gamma} \leq C\ep,
\]
provided $0<\lambda\leq \delta \ll 1$ and $\phi_0 \in \Sh$ satisfying
\[
\hat{\phi}_0(\xi) = O(|\xi|^\kappa) \text{ as } \xi \rightarrow 0,
\]
for some $\kappa >-\gamma-d/2$. We recall that
\[
\phi^{(0)}(t,x)=\phi_0(x) \exp(-i\mu t|\phi_0(x)|^{\nu-1}).
\]
It is clear that we can choose $\phi_0$ so that
\[
\Big|\int \phi^{(0)}(1,x) dx \Big| \geq c \text{ or } |[\phi^{(0)}(1)]\hat{\ }(0)| \geq c,
\]
for some constant $c>0$. Since $\phi^{(0)}(1)$ is rapidly decreasing, the continuity implies that
\[
|[\phi^{(0)}(1)]\hat{\ } (\xi)| \geq c,
\]
for $|\xi| \leq c$ with $0<c \ll 1$. On the other hand, using $(\ref{small dispersion estimate weighted})$ (note that $H^{k,k}$ controls $L^1$ when $k>d/2$), we have
\[
|[\phi^{(\delta)}(1)]\hat{\ } (\xi) - [\phi^{(0)}(1)]\hat{\ } (\xi)| \leq C \delta^{1/2},
\]
and then
\[
|[\phi^{(\delta)}(1)]\hat{\ }(\xi)| \geq c,
\]
for $|\xi|\leq c$ provided $\delta$ is taken small enough. Moreover, we have
\[
u^{(\delta,\lambda)}(\lambda,x)= \lambda^{-\frac{1}{\nu-1}} \phi^{(\delta)}(1,\lambda^{-1}\delta x)
\] 
and 
\[
[u^{(\delta,\lambda)}(\lambda)]\hat{\ }(\xi) = \lambda^{-\frac{1}{\nu-1}} (\lambda\delta^{-1})^d[\phi^{(\delta)}(1)]\hat{\ }(\lambda\delta^{-1}\xi).
\]
This implies that
\[
[u^{(\delta,\lambda)}(\lambda)]\hat{\ }(\xi) \geq c \lambda^{-\frac{1}{\nu-1}} (\lambda\delta^{-1})^d,
\]
for $|\xi|\leq c \lambda^{-1}\delta$. \newline
\indent \underline{In the case $\gamma<-d/2$}, we have
\[
\|u^{(\delta,\lambda)}(\lambda)\|_{H^\gamma} \geq c \lambda^{-\frac{1}{\nu-1}} (\lambda\delta^{-1})^d = c \ep (\lambda \delta^{-1})^{\gamma+d/2}.
\]
Here $0<\lambda\leq \delta \ll 1$, thus $(\lambda\delta^{-1})^{\gamma+d/2}\rightarrow +\infty$. We can choose $\delta$ small enough so that $\lambda \rightarrow 0$ and $(\lambda\delta^{-1})^{\gamma+d/2}\geq \ep^{-2}$ or
\[
\|u^{(\delta,\lambda)}(\lambda)\|_{H^\gamma} \geq \ep^{-1}.
\]
This shows that the solution map fails to be continuous at 0 in $H^\gamma$-topology. \newline
\indent \underline{In the case $\gamma=-d/2$}, we have
\begin{align*}
\|u^{(\delta,\lambda)}(\lambda)\|_{H^{-d/2}} &\geq c \lambda^{-\frac{1}{\nu-1}} (\lambda\delta^{-1})^d \Big(\int_{|\xi|\leq c \lambda^{-1} \delta} (1+|\xi|)^{-d} d\xi\Big)^{1/2} \\
&=c \lambda^{-\frac{1}{\nu-1}} (\lambda\delta^{-1})^d (\log (c\lambda^{-1}\delta))^{1/2} \\
&=c \ep (\log (c\lambda^{-1}\delta))^{1/2}.
\end{align*}
By choosing $\delta$ small enough so that $\lambda \rightarrow 0$ and $\log (c\lambda^{-1}\delta) \geq \ep^{-4}$, we see that
\[
\|u^{(\delta,\lambda)}(\lambda)\|_{H^{-d/2}} \geq \ep^{-1}.
\]
This completes the proof of Thereom $\ref{theorem ill-posedness}$.
\defendproof
\section*{Acknowledgments} The author would like to express his deep thanks to his wife-Uyen Cong for her encouragement and support. He also would like to thank his supervisor Prof. Jean-Marc BOUCLET for the kind guidance and constant encouragement. He also would like to thank the reviewers for their helpful comments and suggestions, which helped improve the manuscript.
\addcontentsline{toc}{section}{Acknowledments}

{\sc Institut de Math\'ematiques de Toulouse, Universit\'e Toulouse III Paul Sabatier, 31062 Toulouse Cedex 9, France.} \\
\indent Email: \href{mailto:dinhvan.duong@math.univ-toulouse.fr}{dinhvan.duong@math.univ-toulouse.fr}
\end{document}